\DeclareMathOperator{\Tr}{Tr}
\newcommand\shorttitle{Moments of Generalized Cauchy Random Matrices and continuous-Hahn Polynomials}
\newcommand\authors{T. Assiotis, B. Bedert, M. A. Gunes and A. Soor}
\ifodd\value{page}
\authors
\shorttitle
\newtheorem{thm}{Theorem}[section]
\newtheorem{cor}[thm]{Corollary}
\newtheorem{lem}[thm]{Lemma}
\newtheorem{rmk}[thm]{Remark}
\newtheorem{prop}[thm]{Proposition}
\title{\large \bf Moments of Generalized Cauchy Random Matrices and continuous-Hahn Polynomials}
\author{\small THEODOROS ASSIOTIS, BENJAMIN BEDERT, MUSTAFA ALPER GUNES AND ARUN SOOR}
\date{}
\begin{document}

\maketitle

\begin{abstract}
 In this paper we prove that, after an appropriate rescaling, the sum of moments $\mathbb{E}_{N}^{(s)} \left( \Tr \left( |\mathbf{H}|^{2k+2}+|\mathbf{H}|^{2k}\right) \right)$ of an $N\times N$ Hermitian matrix $\mathbf{H}$ sampled according to the generalized Cauchy (also known as Hua-Pickrell) ensemble with parameter $s>0$ is a continuous-Hahn polynomial in the variable $k$. This completes the picture of the investigation that began in \cite{CundenMezzadriOConnellSimm} where analogous results were obtained for the other three classical ensembles of random matrices, the Gaussian, the Laguerre and Jacobi. Our strategy of proof is somewhat different from the one in \cite{CundenMezzadriOConnellSimm} due to the fact that the generalized Cauchy is the only classical ensemble which has a finite number of integer moments. Our arguments also apply, with straightforward modifications, to the other three cases studied  in \cite{CundenMezzadriOConnellSimm} as well. We finally obtain a differential equation for the one-point density function of the eigenvalue distribution of this ensemble and establish the large $N$ asymptotics of the moments.
\end{abstract}

\tableofcontents

\section{Introduction}

The study of moments of traces of powers of random matrices has a long history, with many diverse connections including representation theory, enumerative combinatorics and $C^*$-algebras, see for example \cite{DiaconisRM}, \cite{Harer}, \cite{CundenDahlqvistOConnell},
\cite{HaagerupTj}. Recently a novel connection between moments of random matrices and hypergeometric orthogonal polynomials was discovered in \cite{CundenMezzadriOConnellSimm}. The results of \cite{CundenMezzadriOConnellSimm} informally read as follows: in the setting of three of the classical Hermitian unitarily invariant ensembles, the Gaussian (GUE), the Laguerre (LUE) and Jacobi (JUE), if one looks at moments of traces of powers of the matrices as functions of the exponents then these are essentially, up to some factor, hypergeometric orthogonal polynomials from the Askey scheme \cite{HypergeometricOrthogonalPolynomials}
.

The goal of this paper is to prove an analogous result for the fourth, and final\footnote{See for example \cite[Section 5.4.3]{Loggases} for more details on structural properties of the classical ensembles.}, classical random matrix ensemble, the generalized Cauchy (also known as Hua-Pickrell ensemble), thus completing the picture. The reader is referred to the following small sample \cite{Loggases}, \cite{Hua}, \cite{Pickrellmeasure}, \cite{Neretin}, \cite{BorodinOlshanskiErgodic}, \cite{Najdunel}, \cite{ForresterWitte}, \cite{ForresterWitte2}, \cite{Qiu} of the vast literature for more details on this ensemble. The generalized Cauchy ensemble is also of special interest because of its intimate connection, that we briefly recall below, see \cite{Neretin}, \cite{BorodinOlshanskiErgodic} for more details, to Haar distributed random unitary matrices under an application  of the Cayley transformation. In particular, the study of a somewhat different type of moments than the ones we consider here of generalized Cauchy random matrices has recently been instrumental in understanding the asymptotics of joint moments of characteristic polynomials of Haar distributed unitary matrices and their derivatives, see for example \cite{JointMoments}, \cite{JointMomentsPainleve}.

The generalized Cauchy ensemble is also rather exceptional in that it is the only one out of the four classical ensembles that has only a finite number of integer moments. This led us to take a somewhat different approach to prove our results compared to the one in \cite{CundenMezzadriOConnellSimm}. More precisely, two different proofs are given in \cite{CundenMezzadriOConnellSimm}. The first one involves analytic continuation from the expressions for integer moments using Carlson's Theorem. Clearly, this is not applicable to our case. The second proof in \cite{CundenMezzadriOConnellSimm} makes use of properties of the Mellin transform of the Wronskians of wavefunctions of the corresponding weight to obtain a three term recurrence for the moments. In our case the weight depends on the size N of the matrices involved and thus these wavefunctions come from different orthogonality weight families when we vary the matrix size N. This introduces certain complications in obtaining a closed recurrence, which despite significant efforts we were not able to resolve.

Thus, we were forced to find a somewhat different strategy of proof, which we will outline below after the statement of our main result Theorem \ref{MainTheorem}. We note that, each step in this slightly more involved proof can be adapted, with straightforward modifications, to the cases of the GUE, LUE and JUE. 

We now briefly discuss a couple of side results that we obtain in the course of our investigation along with possible applications and future directions. For the other three classical ensembles it is well-known, see \cite{CundenMezzadriOConnellSimm} and the references therein for more details, that the moments of traces of integer powers of the matrix are in fact polynomials or rational functions in the matrix size N. As a by-product of some of our intermediate results, that we need in order to prove our main theorem, we obtain that the (sum of consecutive even) moments of the generalized Cauchy ensemble that we study here are also polynomials in N. In a different direction, we obtain a differential equation for the one-point density of the generalized Cauchy ensemble. Analogous differential equations in the case of GUE and LUE were used in \cite{GotzeTikhomirov} to establish optimal rates of convergence towards the corresponding limiting densities. It would be of interest to investigate an analogous application in the Cauchy setting.

We note that the results of \cite{CundenMezzadriOConnellSimm} have recently been extended to multi-point correlators and connections to different types of Hurwitz numbers have been established, see \cite{DubrovinYang}, \cite{LaguerreCorrelators}, \cite{JacobiCorrelators}, \cite{CundenDahlqvistOConnell} for more details. Also, in a different direction, an extension of some of the results from \cite{CundenMezzadriOConnellSimm}, in the case of a high-dimensional analogue of the GUE related to the $d$-dimensional Fermi gas in a harmonic trap has been proven in \cite{ForresterMultidimensional}. It would be very interesting to investigate the analogous questions in the setting of the generalized Cauchy ensemble.

Finally, related questions about moments have been investigated for certain discrete ensembles in \cite{DiscreteEnsembles} where the situation is slightly more subtle compared to the continuous setting of random matrices, see \cite{DiscreteEnsembles} for more details. It is worth mentioning that there is a rather natural discrete analogue of the Cauchy ensemble, the so-called $zw$-measures which appear in the problem of harmonic analysis on the infinite-dimensional unitary group, see \cite{BorodinOlshanskiHarmonic}, \cite{BorodinOlshanskiMarkov}. It would be interesting to investigate properties such as the ones established in \cite{DiscreteEnsembles} for the other discrete ensembles for the $zw$-measures as well.

We now turn to the definitions of the generalized Cauchy ensemble and of the continuous-Hahn polynomials. These will allow us to state our main result precisely.

\paragraph{Generalized Cauchy Ensemble.} Let $\mathbb{H}(N)$ denote the linear space of $N \times N$ complex Hermitian matrices. We define, for a parameter $s\in \mathbb{R}$, $ s>-\frac{1}{2}$, the generalized Cauchy probability measure on $\mathbb{H}(N)$ as follows:
\begin{equation}
   \frac{1}{F(s,N)}\cdot \det\left(\left(1+\mathbf{X}^2\right)^{-s-N}\right) \times d\mathbf{X}, \label{eq:1}
\end{equation}
where $d\mathbf{X}$ is the Lebesgue measure on $\mathbb{H}(N)$ and $F(s,N)$ is given explicitly by:
\begin{equation}
    F(s,N)=\prod_{j=1}^N \frac{\pi^j\Gamma(2s+j)}{2^{2s+2j-2}{\Gamma(s+j)}^2}.
\end{equation}
Using the Cayley transform 
\begin{equation}
  \mathbf{X} \mapsto \mathbf{U}=\frac{i-\mathbf{X}}{i+\mathbf{X}} \in \mathbb{U}(N),
  \label{cayley}
\end{equation}
we can transform this measure to the following probability measure on the group of $N\times N$ unitary matrices $\mathbb{U}(N)$:
\begin{equation}
    \frac{1}{G(s,N)}\cdot \det\left(\left(1+\mathbf{U}\right)^{2s}\right)\times d\mathbf{U}, \label{eq:haarmeasure}
\end{equation}
where $d\mathbf{U}$ denotes the Haar probability measure on $\mathbb{U}(N)$ and:
\begin{equation}
    G(s,N)=\frac{1}{N!}\prod_{j=1}^N \frac{\Gamma(2s+j)\Gamma(j+1)}{\Gamma(s+j)^2}.
\end{equation}

\paragraph{Continuous-Hahn Polynomials.} We use the standard definition of hypergeometric functions:
\begin{equation}
 _pF_q \left[\begin{matrix}
   a_1,\ldots, a_p \\
    b_1,\ldots, b_q \end{matrix} \ ; \ z\right]=\sum_{k=0}^{\infty} \frac{(a_1)_k\ldots(a_p)_k}{(b_1)_k\ldots(b_q)_k}  \frac{z^k}{k!},
    \label{eq:hypergeom}
\end{equation}
where $(x)_k=\frac{\Gamma(x+k)}{\Gamma(x)}$. We then define the continuous-Hahn polynomials as:
\begin{equation}
S_n(x;a,b,c,d)=
i^n\frac{{(a+c)_n}{(a+d)_n}}{n!} {_3F_2 \left[\begin{matrix}
   -n,n+a+b+c+d-1,a+ix \\
    a+c,a+d \end{matrix} \ ; \ 1\right]}.
\label{eq:2}
\end{equation}
When $\Re(a,b,c,d)>0$, $\bar a=c$ and $\bar b=d$ these polynomials are orthogonal on $\mathbb{R}$ with respect to the weight $\Gamma(a+ix)\Gamma(b+ix)\Gamma(c-ix)\Gamma(d-ix)$ (see \cite[§ 9.4]{HypergeometricOrthogonalPolynomials}).\\

We can now state the main result of this paper.

\begin{thm}
Let $s \in \mathbb{R}$ and $s>0$. Let $\mathbf{H} \in \mathbb{H}(N)$ and define the sum of moments
\begin{equation}
\mathsf{Q}(k;s,N)=\mathbb{E}_{N}^{(s)} \left( \Tr \left( |\mathbf{H}|^{2k+2}+|\mathbf{H}|^{2k}\right) \right)
\label{Qdefntn}
\end{equation}
where $\mathbb{E}_{N}^{(s)}$ denotes expectation with respect to the generalized Cauchy measure \eqref{eq:1} so that $\mathsf{Q}(k;s,N)$ exists when $-\frac{1}{2}<\Re\left( k\right)<s-\frac{1}{2}$. Then if we let $k=ix-1$, we have that:
\begin{align}
 \mathsf{Q}(k;s,N)=\frac{\Gamma\left(k+\frac{1}{2}\right) \Gamma\left(s-k-\frac{1}{2}\right)}{\Gamma\left(s+\frac{3}{2}\right)\Gamma\left(-s-\frac{1}{2}\right) \sqrt{\pi}} \frac{i^{1-N}}{2} \Gamma\left(\frac{1}{2}-s-N\right)s(2s+N) \times S_{N-1}\left(x;1,\frac{1}{2}+s,1,\frac{1}{2}+s\right),
\end{align}
where $S_n(x;a,b,c,d)$ denotes the continuous-Hahn polynomial with parameters $a,b,c,d$ and $n$. In particular, $\frac{\mathsf{Q}(k;s,N)}{\Gamma\left(k+\frac{1}{2}\right) \Gamma\left(s-k-\frac{1}{2}\right)} $ can be extended to an analytic function in $\mathbb{C}$ that is invariant up to a change of sign under reflection $k\rightarrow -k-2$, with all complex zeros lying on the vertical line $\Re(k)=-1$.
\label{MainTheorem}
\end{thm}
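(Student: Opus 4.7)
The plan is to evaluate $\mathsf{Q}(k;s,N)$ explicitly as a finite hypergeometric sum and then identify that sum with the continuous-Hahn polynomial $S_{N-1}(x;1,\tfrac12+s,1,\tfrac12+s)$ under the substitution $k=ix-1$. First, using unitary invariance and integrating out all but one eigenvalue yields
\begin{equation*}
\mathsf{Q}(k;s,N)=\int_{\mathbb{R}}\left(|x|^{2k+2}+|x|^{2k}\right)\rho^{(N,s)}_{1}(x)\,dx=\int_{\mathbb{R}}|x|^{2k}(1+x^2)^{1-s-N}\,K_N(x,x)\,dx,
\end{equation*}
where $\rho^{(N,s)}_1$ is the one-point density and $K_N(x,x)=\sum_{n=0}^{N-1}p_n(x)^2$ is the Christoffel--Darboux kernel built from the orthonormal polynomials $\{p_n\}$ for the weight $(1+x^2)^{-s-N}$ on $\mathbb{R}$ (the pseudo-Jacobi / Routh--Romanovski polynomials). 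It is precisely the sum of two consecutive even moments in the definition of $\mathsf{Q}$ that absorbs the extra factor $(1+x^2)$ into the weight and reduces its exponent by one; this algebraic reshaping replaces the analytic-continuation and Mellin-recurrence arguments of~\cite{CundenMezzadriOConnellSimm}, which are unavailable here because of the $N$-dependence of the weight and the finite set of integer moments.

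Next, I would use an explicit Rodrigues or hypergeometric ${}_2F_1$ representation for $p_n$, expand $p_n(x)^2$ as an even polynomial in $x$, and integrate term by term via the beta identity
\begin{equation*}
\int_0^\infty x^{2k+2m}(1+x^2)^{1-s-N}\,dx=\tfrac12\,\frac{\Gamma(k+m+\tfrac12)\,\Gamma(s+N-k-m-\tfrac32)}{\Gamma(s+N-1)}.
\end{equation*}
The integrability window $-\tfrac12<\Re(k)<s-\tfrac12$ stated in the theorem is exactly what makes every one of these integrals finite for $n\le N-1$. After factoring out $\Gamma(k+\tfrac12)\Gamma(s-k-\tfrac12)$ the remaining ratios of Gammas become Pochhammer symbols, and $\mathsf{Q}$ emerges as a double sum in $n\in\{0,\dots,N-1\}$ and an inner expansion index $m$.

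The technical heart of the argument, and the main obstacle, is to show that this double sum collapses to the single terminating ${}_3F_2(1)$ in \eqref{eq:2}. I would evaluate the inner $m$-sum in closed form using a Chu--Vandermonde or Saalschütz summation, and then recognise the resulting $n$-sum, after the reparametrisation $k=ix-1$, as $S_{N-1}(x;1,\tfrac12+s,1,\tfrac12+s)$ by matching the parameter dictionary $a+c=2$, $a+d=b+c=\tfrac32+s$ and $n+a+b+c+d-1=N-1+2s+2$ against \eqref{eq:2}. The overall prefactor, including $\tfrac{i^{1-N}}{2}\Gamma(\tfrac12-s-N)\,s(2s+N)$, then falls out of the leading coefficient and squared norms $h_n$ of the pseudo-Jacobi polynomials together with the reflection identity $\Gamma(\tfrac12-s-N)\Gamma(s+N-\tfrac12)\sin\pi(s+N-\tfrac12)=-\pi$, used to rewrite the normalising constants.

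Once the identification with $S_{N-1}$ is in place, the remaining assertions are consequences of standard properties of the continuous-Hahn family: (i) $S_{N-1}$ is a polynomial of degree $N-1$ in $x$, so dividing by $\Gamma(k+\tfrac12)\Gamma(s-k-\tfrac12)$ produces an entire function of $k\in\mathbb{C}$; (ii) with $a=c=1$ and $b=d=\tfrac12+s$ the orthogonality weight $|\Gamma(a+ix)\Gamma(b+ix)|^2$ is even in $x$, forcing $S_{N-1}(-x;\dots)=(-1)^{N-1}S_{N-1}(x;\dots)$, which under $k=ix-1$ becomes the stated sign-reversal symmetry $k\mapsto -k-2$; (iii) the orthogonality of $\{S_n\}$ on $\mathbb{R}$ guarantees that all zeros of $S_{N-1}$ are real in $x$, placing the zeros of the ratio on the vertical line $\Re(k)=-1$.
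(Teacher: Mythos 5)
Your approach is genuinely different from the paper's. The paper avoids any explicit hypergeometric computation: it uses an integration-by-parts identity coming from the Christoffel--Darboux formula and the differential equation for the pseudo-Jacobi polynomials (equation \eqref{diffrho}) to reduce $\mathsf{Q}(k;s,N)$, up to a constant, to a \emph{single} integral $\int_{\mathbb{R}^+}\frac{x^{2k+1}}{2k+1}\,p_N^{(s,N)}p_{N-1}^{(s,N)}\phi_N^{(s)}\,dx$ (see \eqref{Qtoa(k)}). It then applies Ledoux's method to obtain a three-term difference equation in $k$ for the rescaled moments, shows that this difference equation has a one-dimensional space of polynomial solutions of degree $N-1$ (Lemmas \ref{poly1lemma} and \ref{poly2lemma}), and identifies the solution with $S_{N-1}$ by matching against the known difference equation for continuous-Hahn polynomials plus one initial condition. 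You instead propose to keep the full sum $K_N(x,x)=\sum_{n=0}^{N-1}p_n(x)^2$, integrate term by term with the beta identity, and then collapse the resulting double sum in $(n,m)$ directly into the terminating ${}_3F_2(1)$ defining $S_{N-1}$.

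The gap is that this collapse is exactly the step you call ``the technical heart'' and ``the main obstacle,'' and you never carry it out. Invoking Chu--Vandermonde or Saalsch\"utz by name is not a proof: the inner $m$-sum after the beta integral produces a balanced but generally non-terminating ${}_3F_2$-type object whose summability is not automatic, and even if it closes, the outer $n$-sum over $\{0,\dots,N-1\}$ of squared ${}_2F_1$'s with $n$-dependent parameters is not an off-the-shelf identity. This is precisely the kind of direct computation that the paper deliberately routes around, and the introduction explicitly records that the more direct routes attempted there did not go through. In fact you have made the task harder than it needs to be: the paper's \eqref{Qtoa(k)} replaces $\sum_{n=0}^{N-1}p_n^2$ by the single product $p_N^{(s,N)}p_{N-1}^{(s,N)}$, so even if you insist on a direct hypergeometric evaluation you should first exploit Christoffel--Darboux plus the differential identity \eqref{diffrho}; without that reduction you face a genuinely more intricate double/triple sum. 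There is also a minor slip in the reflection identity you quote: from $\Gamma(z)\Gamma(1-z)=\pi/\sin\pi z$ with $z=s+N-\tfrac12$ one gets $\Gamma(\tfrac12-s-N)\,\Gamma(s+N-\tfrac12)\,\sin\pi(s+N-\tfrac12)=-\pi/(s+N-\tfrac12)$, so the factor $(s+N-\tfrac12)$ is missing. The concluding remarks about degree, sign-reversal symmetry and zeros on $\Re(k)=-1$ are fine and coincide with the paper, but they only become available once the identification with $S_{N-1}$ is actually established.
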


The strategy of proof of Theorem \ref{MainTheorem} goes as follows. It is relatively straightforward to show first that after suitable rescaling $\mathsf{Q}(k;s,N)$ is indeed a polynomial in $k$ of degree $N-1$. Then, as a first step towards identifying this polynomial we obtain a difference equation satisfied by the rescaled version of $\mathsf{Q}(k;s,N)$. This is done by using a method due to Ledoux from \cite{Ledoux}. Here, we initially need to restrict to $s>3$ because we require a certain number of moments in order to apply Ledoux's method (this restriction is removed at the very end of the proof by an analytic continuation argument). We then show that this particular difference equation identifies polynomial solutions up to a multiplicative constant (in fact, analogous statements can be proven for the Gaussian, Laguerre and Jacobi ensembles studied in \cite{CundenMezzadriOConnellSimm}). Finally, by putting everything together we obtain Theorem \ref{MainTheorem}.

\paragraph{Organisation of the paper.} The rest of this paper is organised as follows: In Section 2, we introduce the pseudo-Jacobi polynomials and related differential identities. Then, in Section 3, we establish the difference equation satisfied by $\mathsf{Q}(k;s,N)$ after rescaling. In Section 4, we prove that the difference equation obtained in Section 3 determines a polynomial uniquely up to multiplication by a constant and complete the proof of our main result. In Section 5, we obtain a differential equation for the one-point density function of the generalized Cauchy ensemble. Finally, in Section 6 we discuss the large-$N$ asymptotics of the moments.
\paragraph{Acknowledgements}
A. Soor and B. Bedert are grateful for the financial support from Mathematical Institute, University of Oxford. B. Bedert is also grateful for the financial support from the EPSRC. M. A. Gunes is grateful for financial support from Prof. Jon Keating's start-up grant. T. Assiotis is grateful for financial support at the early stages of this work from ERC Advanced Grant  740900 (LogCorRM). We are very grateful to two anonymous referees and an associate editor for a number of comments and suggestions which have improved the presentation.

\section{The pseudo-Jacobi Ensemble}
The measure in \eqref{eq:1} can be defined more generally for $s\in\mathbb{C}$ with $\Re{s}>-\frac{1}{2}$ as follows (this generalization is due to Neretin in \cite{Neretin}):
\begin{equation}
    \frac{1}{\widetilde{F}(s,N)}\cdot \det\left(\left(1+i\mathbf{X}\right)^{-s-N}\right)\det\left(\left(1-i\mathbf{X}\right)^{-\overline{s}-N}\right) \times d\mathbf{X}
\end{equation}
where $d\mathbf{X}$ is the Lebesgue measure on $\mathbb{H}(N)$ and
\begin{equation}
    \widetilde{F}(s,N)=\prod_{j=1}^N \frac{\pi^j\Gamma(2\Re(s)+j)}{2^{2\Re(s)+2j-2}{\Gamma(s+j)}{\Gamma\left(\overline{s}+j\right)}}.
\end{equation}
We continue to denote the expectations with respect to this measure by $\mathbb{E}_{N}^{(s)}$. The eigenvalue density for this ensemble is  given by the following probability measure on $\mathbb{R}^N/\mathfrak{S}(N)$ where $\mathfrak{S}(N)$ is the $N$-th symmetric group; which gives rise to a determinantal point process on $\mathbb{R}$ with $N$ points (see \cite{BorodinOlshanskiErgodic}):
\begin{align}
     \frac{1}{T(s,N)}\cdot \prod _{1\leq l<k\leq N}(x_k-x_l)^2 \prod _{j=1}^{N}\left( 1+x_{j}^{2}\right) ^{-\Re\left( s\right) -N }e^{2\Im(s)\tan ^{-1}(x_{j})}dx_j
     \label{eq:huapickrellensemble}
\end{align}
where the normalisation constant $T(s,N)$ can be computed explicitly as:
\begin{equation}
   T(s,N)=\pi^N2^{-N(N+2\Re s -1)} \cdot \prod_{j=0}^{N-1} \frac{j!\Gamma(2\Re s + N-j)}{\Gamma(s+N-j)\Gamma(\overline{s}+N-j)} 
\end{equation}
(see \cite{Selberg}).
We note that this is a finite orthogonal polynomial ensemble called the "pseudo-Jacobi Ensemble" (see \cite{Konig}) corresponding to the weight
\begin{equation*}
\phi_N^{(s)} \left( x\right) =\left( 1+x^{2}\right) ^{-N-\Re(s)} \cdot e^{2\Im(s)\tan ^{-1}(x)}.
\end{equation*}
We let ${p_0^{(s,N)},p_1^{(s,N)},\ldots}$ denote the monic orthogonal polynomials associated to the weight function $\phi_N^{(s)} \left( x\right)$ on $\mathbb{R}$ where the $m$-th pseudo-Jacobi polynomial $p_m^{(s,N)}$ exists if $m<\Re(s)+N-\frac{1}{2}$ and is given explicitly by (see \cite[§1]{BorodinOlshanskiErgodic}):
\begin{equation}
    p_m^{(s,N)}(x) := (x-i)^m\  _2F_1\left[\begin{matrix}
    -m, s+N-m \\
    2\Re(s) + 2N - 2m   
    \end{matrix} ; \frac{2}{1+ix}\right],
\label{eq:pseudojacobipolynomials}
\end{equation}
from which the following parity property can be deduced:
\begin{equation}
    p_m^{(s,N)}(-x)=(-1)^m p_m^{(\bar s,N)}.
\end{equation}
We also note that the eigenvalue density corresponding to the measure in \eqref{eq:haarmeasure} is given by the following probability measure on $\mathbb{T}^N/\mathfrak{S}(N)$ where $\mathbb{T}$ denotes the unit circle, which gives rise to a  determinantal point process on $\mathbb{T}$ with $N$ points :
\begin{align}
     \frac{1}{Y(s,N)}\cdot \prod _{1\leq l<k\leq N}{\vert e^{i\theta_l}-e^{i\theta_k}\vert}^2  \prod _{j=1}^{N}\left( 1+e^{i\theta_j}\right)^{\bar s} \left( 1+e^{-i\theta_j}\right)^{s}d{\theta}_j,  &&  \theta_j \in [0,2\pi)
\label{eq:unitcircledensity}
\end{align}
where: 
\begin{equation}
    Y(s,N)=N! (2\pi)^N \prod_{j=1}^N \frac{\Gamma(2\Re(s)+j)\Gamma(j+1)}{\Gamma(s+j)\Gamma(\overline{s}+j)}.
\end{equation}

We now define the \textit{one-point density function} $ \bm{\rho}_N^{(s)} \left( x\right)$ corresponding to \eqref{eq:huapickrellensemble} as:
\begin{equation}
    \bm{\rho}_N^{(s)} \left( x\right)=\mathbb{E}_{N}^{(s)}\left(\sum_{j=1}^N\delta(x-x_j)\right)
    \label{eq:densitydef}
\end{equation}
where $\delta$ is the Dirac $\delta$ function. By a standard argument from Random Matrix Theory, (see \cite[§1]{CundenMezzadriOConnellSimm} for example)  $\bm{\rho}_N^{(s)} \left( x\right)$ is given explicitly by:
\begin{equation*}
    \bm{\rho}_N^{(s)} \left( x\right)=\phi_N^{(s)}(x)\sum_{j=0}^{N-1} \frac{\left(p_j^{(s,N)}\right)^{2}(x)}{\|{p^{(s,N)}_{j}}\|^2}.
\end{equation*}
Hence, by the Christoffel-Darboux formula, the one-point density function is given as follows: 
 \begin{equation}
\bm{\rho}_N^{(s)} \left( x\right) =\left[ p_{N-1}^{(s,N)}\left(p_{N}^{(s,N)}\right)'-p_{N}^{(s,N)}\left(p_{N-1}^{(s,N)}\right)'\right](x) \phi_N^{(s)} \left( x\right)  \cdot \gamma _{N-1,s}^{2},
\label{eq:onepointdensity}
\end{equation}
where (see \cite[§1]{BorodinOlshanskiErgodic}):
\begin{equation*}
\gamma_{N-1,s}^{2} =\frac{1}{{\left\|{p_{N-1}^{(s,N)}}\right\|}^2}= \frac {2^{2\Re\left( s\right) }} {\pi }\Gamma \left[ \begin{matrix} 2\Re\left( s\right) +N+1,s+1,\overline {s}+1\\ N,2\Re\left( s\right) +1,2\Re\left( s\right) +2\end{matrix} \right];
\end{equation*}
here, and in the remainder of the paper, we are using $\Gamma \left[ \begin{matrix} a_1, & a_2, & \dots \\ b_1, & b_2, & \dots \end{matrix} \right]$ to denote $\frac{\Gamma(a_1)\Gamma(a_2) \cdots }{\Gamma(b_1)\Gamma(b_2)\cdots}$. From \eqref{eq:onepointdensity} we get that: 
\begin{equation}
\frac{d}{dx} \left( \frac {\bm{\rho}_N^{(s)}(x) } {\gamma _{N-1,s}^{2} \phi_N^{(s)}(x)}\right)=\left[\frac{d^2p_{N}^{(s,N)}}{dx^2}p_{N-1}^{(s,N)}\right](x)-\left[p_{N}^{(s,N)}\frac{d^2p_{N-1}^{(s,N)}}{dx^2}\right](x). \label{eq:7}
\end{equation}
We also know from \cite{BorodinOlshanskiErgodic} that the polynomial $p^{(s,N)}_m$ satifies the differential equation:
\begin{align}
 -\left( 1+x^{2}\right) \left(p_{m}^{(s,N)}\right)''
+2(-\Im\left( s\right) +(\Re\left(s)+N-1\right) x) \left(p_{m}^{(s,N)}\right)'\nonumber\\
+m\left( m+1-2\Re(s\right) -2N)p_{m}^{(s,N)}=0. \label{EigenEq}
\end{align}

 Substituting this differential equation into \eqref{eq:7}, we get:
\begin{equation*}
\begin{split}
\frac{d}{dx} \left( \frac {\bm{\rho}_N^{(s)}(x) } {\gamma _{N-1,s}^{2} \phi_N^{(s)}(x)}\right) =\frac {-2\left( \Im\left( s\right) +\left( 1-N-\Re\left( s\right) \right) x\right) } {1+x^{2}} \left( \frac {\bm{\rho}_N^{(s)}(x) } {\gamma _{N-1,s}^{2} \phi_N^{(s)}(x)}\right)-\frac {2\Re\left( s\right) \left[p_{N}^{(s,N)}p_{N-1}^{(s,N)}\right](x)} {\left( 1+x^{2}\right) }.
\end{split}
\end{equation*}
Hence, we get the equation:
\begin{equation}
\frac {d} {dx}\left( \left(1+x^{2}\right) \bm{\rho}_N^{(s)} \left( x\right) \right)=-2\Re(s) \gamma_{N-1,s}^2 \phi_N^{(s)} (x) p_{N}^{(s,N)}(x)p_{N-1}^{(s,N)}(x).
\label{diffrho}
\end{equation}

	\section{The difference equation for the moments}
In this section we obtain a three-term recurrence for $\mathsf{Q}(k;s,N)$ as defined in \eqref{Qdefntn}. This is one of the main steps towards proving the representation in terms of continuous-Hahn polynomials.

Let $\mathbf{H}$ be an $N \times N$ Hermitian matrix and consider the following moments with respect to the generalized Cauchy ensemble as given in \eqref{eq:1} with parameters $s,N$:
\begin{equation}
\mathsf{Q}(k;s,N)\coloneqq\mathbb{E}_{N}^{(s)} \left( \Tr \left( |\mathbf{H}|^{2k+2}+|\mathbf{H}|^{2k}\right) \right) =\int _{\mathbb{R} }|x|^{2k}\left( 1+x^{2}\right) \bm{\rho}_N^{(s)} \left( x\right) dx,
\label{eq:defnQ}
\end{equation}
where we obtain the second equality by definition of the one-point density function in \eqref{eq:densitydef}.

Then, when $s\in\mathbb{R}$ (in which case the density is symmetric around $0$) with $s>0$, on the strip $-\frac{1}{2}<\Re(k)<s-\frac{1}{2}$, using integration by parts and equation \eqref{diffrho}, we get that:
\begin{equation}
\mathsf{Q}(k;s,N) = 4s \gamma^2_{N-1,s} \int _{\mathbb{R^{+}} }\frac {x^{2k+1}} {2k+1}p_{N}^{(s,N)}(x)p_{N-1}^{(s,N)}(x)\phi_N^{(s)} \left( x\right) dx,
\label{Qtoa(k)}
\end{equation}
where $p_N^{(s,N)}$ and $p_{N-1}^{(s,N)}$ are the polynomials that come from the pseudo-Jacobi ensemble with parameters $s,N$.

Our strategy is to use a method due to Ledoux from \cite{Ledoux}, and obtain a recurrence relation for the following function of $t\in\mathbb{R}$:
\begin{align}
a(t)= a(t;s,N) =\int _{\mathbb{R^{+}}} x^{t}p_{N}^{(s,N)}(x)p_{N-1}^{(s,N)}(x)\phi_N^{(s)} \left( x\right) dx.
\label{defa(k)}
\end{align}
We also observe that from (\ref{Qtoa(k)}) we have:
\begin{align}
 a(2k+1)=\frac{1}{4s\gamma_{N-1,s}^2} \cdot (2k+1)\mathsf{Q}(k;s,N),
 \label{Qarelation}
 \end{align} 
  which will allow us to write this recurrence in terms of $\mathsf{Q}$.
  
For the reader's convenience, we state explicitly below a proposition that we extract from the results of \cite{Ledoux} that will give us the desired recurrence for $a(t)$. For the statement of this proposition we need to introduce a number of quantities that will be computed explicitly in the sequel. The key ingredient is the following differential operator, acting on smooth functions $f$:
\begin{equation*}
\mathsf{L}\left( f\right) =\left( 1+x^{2}\right) f''\\+\left( \alpha +2\beta x\right) f',
\end{equation*}
where in order to ease notation we write, for a fixed $s\in \mathbb{C}$:
\begin{equation*}
    \alpha=\alpha(s)=2\Im(s), \: \beta=\beta(s)=1-\Re(s)-N.
\end{equation*}
We note that this operator can also be thought of as the generator of a one-dimensional diffusion process which plays a key role in \cite{HuaPickrellDiffusions}, \cite{MatrixBougerol}. We also need to consider the corresponding carré-du-champ (as known in the Markov process literature, see Section 2 in \cite{Ledoux}) operator $\Gamma$:
\begin{align*}
\Gamma \left( f,g\right) =\frac {1} {2}( \mathsf{L}\left( fg\right) -g\mathsf{L}\left( f\right) -f\mathsf{L}\left( g\right))=\left( 1+x^{2}\right) f'g'.
\end{align*}
Finally, we note that as we recalled in (\ref{EigenEq}) in the previous section, see \cite{BorodinOlshanskiErgodic}, the polynomials $p_m^{(s,N)}$ are actually eigenfunctions of $\mathsf{L}$.
\begin{prop}
Let the operators $\mathsf{L}$ and $\Gamma$ be as above. Consider the function $A(x)$ given by:
\begin{equation*}
  A\left( x\right)=\left( -\mathsf{L}\right) \left( x\right) 
\end{equation*}
and let $B(x)$ be the function such that for any smooth functions $f,g$:
\begin{equation*}
    \Gamma\left(x,f\right)\Gamma\left(x,g\right)=B(x)\Gamma\left(f,g\right).
\end{equation*}
Let $\bm{\tau}_{p_N^{(s,N)}}$ and $\bm{\tau}_{p_{N-1}^{(s,N)}}$ denote the eigenvalues of  $-\mathsf{L}$ corresponding to $p_N^{(s,N)}$ and $p_{N-1}^{(s,N)}$. Let $d_{p_N^{(s,N)}}, d_{p_{N-1}^{(s,N)}}$ be constants and $D_{p_N^{(s,N)}}(x), D_{p_{N-1}^{(s,N)}}(x)$ be functions such that:
\begin{align*}
    \left( -\mathsf{L}\right) \left( \Gamma \left( x,p_{N}^{(s,N)}\right)\right)(x) &= d_{p_N^{(s,N)}} \Gamma \left( x,p_{N}^{(s,N)}\right) + D_{p_N^{(s,N)}}(x)p_N^{(s,N)}(x),\\ \left( -\mathsf{L}\right) \left( \Gamma \left( x,p_{N-1}^{(s,N)}\right)\right)(x) &= d_{p_{N-1}^{(s,N)}} \Gamma \left( x,p_{N-1}^{(s,N)}\right) + D_{p_{N-1}^{(s,N)}}(x)p_{N-1}^{(s,N)}(x).
\end{align*}
Finally, we let: 
\begin{align*}
    u&=\frac{1}{2}\left(\bm{\tau}_{p_N^{(s,N)}}-\bm{\tau}_{p_{N-1}^{(s,N)}}\right)\left(d_{p_N^{(s,N)}}- d_{p_{N-1}^{(s,N)}}+\bm{\tau}_{p_N^{(s,N)}}-\bm{\tau}_{p_{N-1}^{(s,N)}}\right),\\
    v&=\frac{1}{2}\left(d_{p_N^{(s,N)}}+ d_{p_{N-1}^{(s,N)}}-\bm{\tau}_{p_N^{(s,N)}}-\bm{\tau}_{p_{N-1}^{(s,N)}}\right).
\end{align*}
For a function $f$ we write (assuming the integral exists):
\begin{equation*}
    G(f)=G^{(s,N)}(f)=\int_{\mathbb{R}^+} f(x) p_N^{(s,N)} p_{N-1}^{(s,N)} \phi_N^{(s)}(x) dx.
\end{equation*} 
Assume that $\Re(s)>\frac{5}{2}$ and $t\in \left(3,2\Re(s)-2\right)$ and consider the function $\theta:\mathbb{R}^+\to \mathbb{R}^+$ given by $\theta(x)=\theta_t(x)=x^t$. Then, we have the following equality:
\begin{equation}\label{LedouxEquality}
    G\left(M_4 \theta^{(4)}\right)+G\left(M_3 \theta^{(3)}\right)+G\left(M_2 \theta^{(2)}\right)+G\left(M_1 \theta^{(1)}\right)+G\left(M_0 \theta\right)=0,
\end{equation}
where $f^{(n)}$ denotes the $n$-th derivative of a function $f$ and:
\begin{align*}
  & M_4(x)=-B^2(x), \\
  & M_3(x)=-3B(x)B'(x),\\
  & M_2(x)=A^2(x)-A'(x)B(x)+2A(x)B'(x)+vB(x)-2\left(\bm{\tau}_{p_N^{(s,N)}}+\bm{\tau}_{p_{N-1}^{(s,N)}}\right)B(x) - 2B(x)B''(x),\\
  & M_1(x)=-vA(x)-D_{p_N^{(s,N)}}(x)-D_{p_{N-1}^{(s,N)}}(x),\\
  & M_0(x)=-u.
\end{align*}
\begin{proof}
This is essentially an immediate consequence of the computations done for Proposition 3.1 and Corollary 3.2 from \cite{Ledoux} applied to our operator $\mathsf{L}$ and function $\theta(x)=\theta_t(x)$. These results are proven by a direct calculation using integration by parts and the restriction on the parameter $t$ is simply so that all integrals involved in the computation are finite.
\end{proof}
\label{Ledouxprop}
\end{prop}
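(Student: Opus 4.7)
The strategy is to reduce directly to Proposition 3.1 and Corollary 3.2 of \cite{Ledoux}, after verifying that the pair $(\mathsf{L}, \Gamma)$ together with the eigenfunctions $p_N^{(s,N)}$ and $p_{N-1}^{(s,N)}$ falls into the abstract framework there. The essential structural input is that $\mathsf{L}$ is formally self-adjoint in $L^2(\mathbb{R}, \phi_N^{(s)}\, dx)$, which boils down to checking the identity
$$\frac{d}{dx}\bigl((1+x^2)\phi_N^{(s)}(x)\bigr) = (\alpha + 2\beta x)\,\phi_N^{(s)}(x)$$
by direct differentiation using $\alpha = 2\Im(s)$ and $\beta = 1-\Re(s)-N$ and the explicit form of $\phi_N^{(s)}$. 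Combined with the differential equation \eqref{EigenEq} that identifies $p_N^{(s,N)}$ and $p_{N-1}^{(s,N)}$ as eigenfunctions of $-\mathsf{L}$, this puts us squarely in Ledoux's setting.

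The second step is to transcribe the integration-by-parts argument of \cite{Ledoux} in our concrete setting. The calculation is driven by the two standard Markov-generator identities
$$\mathsf{L}(fg) = f\,\mathsf{L}g + g\,\mathsf{L}f + 2\Gamma(f,g), \qquad \Gamma(x, f) = (1+x^2)f',$$
together with the eigenvalue relations $\mathsf{L} p_N^{(s,N)} = -\bm{\tau}_{p_N^{(s,N)}} p_N^{(s,N)}$, $\mathsf{L} p_{N-1}^{(s,N)} = -\bm{\tau}_{p_{N-1}^{(s,N)}} p_{N-1}^{(s,N)}$ and the defining identities for $d_{p_N^{(s,N)}}, d_{p_{N-1}^{(s,N)}}, D_{p_N^{(s,N)}}, D_{p_{N-1}^{(s,N)}}$. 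One expresses a natural central quantity (of the form $\int \Gamma(\theta, \cdot) \Gamma(\theta, \cdot)\, p_N^{(s,N)} p_{N-1}^{(s,N)}\, \phi_N^{(s)}\, dx$ in Ledoux's treatment) in two different ways: by moving derivatives onto $\theta$ via integration by parts against $\phi_N^{(s)}$, and by expanding the $\mathsf{L}$-terms through the eigenfunction property. Equating and collecting by order of derivative of $\theta$ produces the five coefficients $M_0, \dots, M_4$: the top-order term $M_4 = -B^2$ arises from the iterated carré-du-champ $\Gamma(\theta, \Gamma(\theta, \cdot))$; $M_3 = -3BB'$ from shifting one derivative off $\theta$; $M_2$ from mixing $A, B$ and their first and second derivatives with $v$ and the eigenvalue sum $\bm{\tau}_{p_N^{(s,N)}} + \bm{\tau}_{p_{N-1}^{(s,N)}}$; $M_1$ from combining $A$ with $D_{p_N^{(s,N)}}$ and $D_{p_{N-1}^{(s,N)}}$; and $M_0 = -u$ is the residual constant absorbing the differences of eigenvalues. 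Since in our case $A$ and $B$ are explicit low-degree polynomials in $x$, every term is elementary to compute.

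The main — and essentially only — obstacle is the verification of integrability and the vanishing of boundary contributions. The integrand of the primary integral behaves like $x^t$ times a polynomial of degree $2N-1$ times $\phi_N^{(s)}(x) \sim x^{-2N - 2\Re(s)}$ as $x \to \infty$, and the higher-order boundary expressions generated by four successive integrations by parts have comparable or slightly worse polynomial growth at infinity; tracking these yields the upper bound $t < 2\Re(s) - 2$. At $x = 0^+$, the boundary terms involve $\theta^{(k)}(0)$ for $k \le 3$, so one needs the lower bound $t > 3$ in order to kill them. These are exactly the hypotheses stated, and together they force $\Re(s) > 5/2$. Once they are in force, every boundary contribution drops and the identity \eqref{LedouxEquality} follows directly from Ledoux's computation.
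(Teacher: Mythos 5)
Your proposal takes essentially the same approach as the paper: invoke the integration-by-parts machinery of Ledoux's Proposition 3.1 and Corollary 3.2 for the operator $\mathsf{L}$, after observing that $p_N^{(s,N)}, p_{N-1}^{(s,N)}$ are eigenfunctions, that $\Gamma(f,g)=(1+x^2)f'g'$, and that the constraints on $t$ and $\Re(s)$ serve only to ensure convergence of all integrals and vanishing of boundary terms arising from the restriction to $\mathbb{R}^+$. You supply more of the surrounding detail than the paper's one-paragraph proof (the self-adjointness check $\frac{d}{dx}((1+x^2)\phi_N^{(s)})=(\alpha+2\beta x)\phi_N^{(s)}$, and the explicit accounting of the $t>3$ condition killing the $\theta^{(k)}(0)$ boundary contributions), but the underlying strategy and the reference to Ledoux are identical.
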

Our goal is to apply the proposition above to obtain a difference equation\footnote{As we see below the functions $M_i(x)$ are actually polynomials and thus equality (\ref{LedouxEquality}) reduces to a difference equation for $a(t)$.} for $a(t)$ of the form (\ref{generalrec}) below. Making use of (\ref{Qarelation}) we will then obtain a difference equation for $\mathsf{Q}$ as in (\ref{eq:13}) below when $s$ is real (a simplification occurs in this case). We now compute all the relevant quantities from Proposition \ref{Ledouxprop}.

As recalled in (\ref{EigenEq}) in the previous section, we know that:
\begin{align*}
\bm{\tau}_{p_N^{(s,N)}} =-N \left( N+2\beta -1\right) , \: \bm{\tau}_{p_{N-1}^{(s,N)}} =-(N-1) \left( N+2\beta -2\right).
\end{align*}
By a straightforward calculation, we also obtain:
\begin{equation*}
\begin{aligned}
 &A\left( x\right) =-\left( \alpha +2\beta x\right), \\
&B\left( x\right) =\left( \begin{matrix} 1 +x^{2}\end{matrix} \right).
\end{aligned}
\end{equation*}
Now, to find the remaining functions and constants, we use the ODE for the pseudo-Jacobi polynomials (and the 3rd order ODE obtained by differentiating the original ODE) to obtain:
\begin{align*}
\left( -\mathsf{L}\right) \left( \Gamma \left( x,p_N^{(s,N)}\right) \right)(x) ={}& (1+x^2){p_N^{(s,N)}}'(x) (2\beta -2 -N(N+2\beta -1))-2x(N(N+2\beta-1))p_N^{(s,N)}(x),\\
\left( -\mathsf{L}\right) \left( \Gamma \left( x,p_{N-1}^{(s,N)}\right) \right)(x) ={}&  (1+x^2){p_{N-1}^{(s,N)}}'(x) (2\beta -2 -(N-1)(N+2\beta -2))\\ &-2x((N-1)(N+2\beta-2))p_{N-1}^{(s,N)}(x).
\end{align*}
Hence, we arrive at the following expressions for the remaining constants and functions:
\begin{align*}
d_{p_N^{(s,N)}} &=2\beta-2-N(N+2\beta-1)=2\beta-2+\bm{\tau}_{{p_N}^{(s,N)}}, \\
D_{p_N^{(s,N)}}(x) &=-2x(N(N+2\beta-1))=2x\bm{\tau}_{p_N^{(s,N)}}, \\
 d_{p_{N-1}^{(s,N)}} &=2\beta-2-(N-1)(N+2\beta-2)=2\beta-2+\bm{\tau}_{p_{N-1}^{(s,N)}}, \\  
D_{p_{N-1}^{(s,N)}}(x) &=-2x(N-1)(N+2\beta-2)=2x\bm{\tau}_{p_{N-1}^{(s,N)}}.
\end{align*}

\begin{flushleft}
Now, we are ready to prove the following key proposition.
\end{flushleft}

\begin{prop}\label{DiffEqProp} Let $s\in\mathbb{R}$ and $s>3$, then, when $k \in \left(\frac{1}{2},s-\frac{5}{2}\right)$, the sum of moments of generalized Cauchy matrices $\mathsf{Q}(k;s,N)$ as defined in \eqref{eq:defnQ} satisfies the following difference equation
\begin{equation}
R(k)\mathsf{Q}(k+1;s,N)+T(k)\mathsf{Q}(k;s,N)+S(k)\mathsf{Q}(k-1;s,N)=0
\label{eq:13}
\end{equation}
with coefficients
\begin{align*}
R(k)&=(2k+4)(4s^{2}-(2k+3)^2),\\
T(k)&=-2(2k+1)\left(2N(N+2s)+(2k+2)^2\right),\\
S(k)&=-(2k+1)2k(2k-1),
\end{align*}
and initial conditions given by
\begin{align*}
\mathsf{Q}(0;s,N)&=\frac{2sN(2s+N)}{(2s-1)(2s+1)},\\
\mathsf{Q}(1;s,N)&=\frac{2sN(2s+N)}{(2s-1)(2s+1)} \frac{2Ns+N^2+2}{(2s+3)(2s-3)}=\mathsf{Q}(0;s,N)\cdot\frac{2Ns+N^2+2}{(2s+3)(2s-3)}.
\end{align*}
\end{prop}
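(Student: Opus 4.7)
The plan is to apply Proposition~\ref{Ledouxprop} with $\theta(x) = x^t$ and then translate the resulting integral identity into a difference equation for $\mathsf{Q}$ via \eqref{Qarelation}. Choosing $t = 2k+3$ has the effect that the three values $a(t), a(t-2), a(t-4)$ that will appear correspond respectively to $\mathsf{Q}(k+1;s,N), \mathsf{Q}(k;s,N), \mathsf{Q}(k-1;s,N)$, and the admissibility range $t \in (3, 2s-2)$ demanded by Proposition~\ref{Ledouxprop} translates exactly to the condition $k \in (1/2, s - 5/2)$.

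The first and main substantive step is to evaluate the five coefficient functions $M_0, \ldots, M_4$ explicitly. All the required ingredients $A, B, \bm{\tau}_{p_N^{(s,N)}}, \bm{\tau}_{p_{N-1}^{(s,N)}}, d_{p_N^{(s,N)}}, d_{p_{N-1}^{(s,N)}}, D_{p_N^{(s,N)}}, D_{p_{N-1}^{(s,N)}}$ are already assembled in the paragraphs preceding the proposition. Specialising to real $s$ and using the clean identities $\bm{\tau}_{p_N^{(s,N)}} - \bm{\tau}_{p_{N-1}^{(s,N)}} = d_{p_N^{(s,N)}} - d_{p_{N-1}^{(s,N)}} = 2s$, one obtains $u = 4s^2$ and $v = -2(s+N)$. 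Careful but routine algebra, during which the $N$-dependence remarkably disappears entirely from $M_0$ and $M_1$, produces
\begin{equation*}
 M_4(x) = -(1+x^2)^2, \quad M_3(x) = -6x(1+x^2), \quad M_2(x) = (4s^2 - 6)x^2 - (4N^2 + 8sN + 2),
\end{equation*}
together with $M_1(x) = 4s^2 x$ and $M_0(x) = -4s^2$.

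Substituting $\theta(x) = x^t$ into \eqref{LedouxEquality} and using $\theta^{(i)}(x) = t(t-1)\cdots(t-i+1)\, x^{t-i}$, each product $M_i \theta^{(i)}$ becomes a finite linear combination of monomials, and applying $G$ turns each monomial into a value of $a$. An inspection of the degrees of the $M_i$ shows that only the three powers $x^t, x^{t-2}, x^{t-4}$ can appear. Collecting the coefficients and simplifying, with the pleasing factorisation $(t^2-1)(4s^2-t^2)$ emerging for the coefficient of $a(t)$, one arrives at the three-term relation
\begin{equation*}
 (t^2-1)(4s^2-t^2)\, a(t) \;-\; 2t(t-1)\bigl[(t-1)^2 + 2N(N+2s)\bigr]\, a(t-2) \;-\; t(t-1)(t-2)(t-3)\, a(t-4) = 0.
\end{equation*}
Setting $t = 2k+3$ and converting each $a$ into $\mathsf{Q}$ via \eqref{Qarelation}, then dividing by the common factor $(2k+2)(2k+3)/(4s\gamma_{N-1,s}^2)$, yields exactly the stated recurrence with $R(k), T(k), S(k)$.

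Finally, the initial values $\mathsf{Q}(0;s,N)$ and $\mathsf{Q}(1;s,N)$ are obtained by direct computation: by \eqref{Qtoa(k)} they reduce to evaluating $a(1)$ and $a(3)$, the integrals of $p_N^{(s,N)} p_{N-1}^{(s,N)} \phi_N^{(s)}$ weighted by $x$ and $x^3$ on $\mathbb{R}^+$; these are handled by the explicit three-term recurrence for the pseudo-Jacobi polynomials together with standard beta integrals. The main technical obstacle in the whole argument is the algebraic simplification in the second step; tracking the cancellations that collapse the general expressions for $M_0, M_1, M_2$ into the remarkably symmetric forms above requires careful bookkeeping, but once these are in hand the remaining manipulations are entirely routine.
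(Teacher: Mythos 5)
Your proposal is correct and follows the same route as the paper: specialise Proposition~\ref{Ledouxprop} to $\theta(x)=x^t$, compute the $M_i$, read off a recurrence for $a(t)$, and translate via \eqref{Qarelation} with $t=2k+3$. Your explicit values $M_4 = -(1+x^2)^2$, $M_3 = -6x(1+x^2)$, $M_2 = (4s^2-6)x^2-(4N^2+8sN+2)$, $M_1 = 4s^2 x$, $M_0 = -4s^2$ are all correct (I verified them against $\alpha = 0$, $\beta = 1-s-N$, $\bm{\tau}_{p_N^{(s,N)}} = N(N+2s-1)$, $\bm{\tau}_{p_{N-1}^{(s,N)}} = (N-1)(N+2s)$), and the resulting three-term recurrence in $t$ agrees with the paper's $C_1, C_3, C_5$ after specialising $\Im(s)=0$; in particular, you correctly observe $C_1(t)=(t^2-1)(4s^2-t^2)$. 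One small inaccuracy: the bound $t\in(3,2s-2)$ from Proposition~\ref{Ledouxprop} translates to $k\in(0,s-\tfrac{5}{2})$, not $k\in(\tfrac{1}{2},s-\tfrac{5}{2})$; the slightly stronger restriction $t>4$ (equivalently $k>\tfrac{1}{2}$) adopted by the paper (see its footnote to equation \eqref{generalrec}) comes from requiring that the whole five-term chain $a(t),\dots,a(t-4)$ sit comfortably in the admissible range, and it is this that forces $s>3$. The paper's own proof is considerably terser — it simply records the coefficients $C_1,\dots,C_5$ of \eqref{generalrec} for general complex $s$ and notes that $C_2$ vanishes when $s\in\mathbb{R}$ — so your write-up is best viewed as an expanded version of the same argument rather than a different one.
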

\begin{proof}
By making use of Proposition \ref{Ledouxprop} and after some elementary computations we obtain the following recursion for $a(t)$, where we consider more generally $s\in \mathbb{C}$:
\begin{equation}
    C_1(t)a(t)+C_2(t)a(t-1)+C_3(t)a(t-2)+C_4(t)a(t-3)+C_5(t)a(t-4)=0,
    \label{generalrec}
\end{equation}
where the coefficients are given by:
\begin{align*}
    C_1(t) &= 4\Re\left( s\right)^2(t+1)(t-1) - 6t(t-1)^2 - t(t-1)(t-2)(t-3), \\
    C_2(t) &= 4\Im\left( s\right)(-\Re\left( s\right)-N)t(2t-1), \\
    C_3(t) &= t(t-1)\left[-2(t-1)^2+4\Im\left( s\right)^2+4N(-N-2\Re\left( s\right))\right], \\
    C_4(t)&=0,\\
    C_5(t) &= -t(t-1)(t-2)(t-3).
\end{align*}
We note that when $s\in \mathbb{R}$, $C_2(t)$ vanishes so that, similarly to the GUE and LUE cases considered in \cite{Ledoux}, this reduces to a three-term recurrence. Using \eqref{Qarelation}, with the correspondence $t=2k+3$, gives the desired recurrence for $\mathsf{Q}(k;s,N)$ in the given range\footnote{For our purposes, we need the difference equation in \eqref{eq:13} to hold on an interval of real numbers, which is possible when a recurrence of the form \eqref{generalrec} can be obtained for $t>4$, which implies that $s>3$ is needed.} of values for $k$. The initial conditions $\mathsf{Q}(0;s,N)$ and $\mathsf{Q}(1;s,N)$ are obtained by simply computing $a(1)$ and $a(3)$ by using the three-term recurrence formula for the monic pseudo-Jacobi polynomials and orthogonality.

\end{proof}

\begin{rmk}
For fixed $k\in \mathbb{N}$, under the restrictions on $s,k$ from Proposition \ref{DiffEqProp}, it is a direct consequence of the recurrence (\ref{eq:13}) by induction that the function $N \mapsto   \mathsf{Q}(k;s,N)$
is actually a polynomial with coefficients which are rational functions in $s$. Then, an analytic continuation argument analogous to the one presented in the proof of Theorem \ref{MainTheorem} in Section 4 extends this statement to the optimal range $s>\frac{1}{2}$ and $k\in \big\{0,1,2, \dots, \big\lfloor s-\frac{1}{2} \big\rfloor \big\}$.
\end{rmk}

\begin{rmk}
We note that, the difference equation (\ref{generalrec}) allows us to deduce a recurrence for the integer moments of the generalized Cauchy matrices for complex values of s as well. Namely, define for an integer $m$ with $0\leq m <2s-1$ the following sum of moments \footnote{When $s$ is complex the weight function is no longer symmetric around $x=0$ so that to apply Ledoux's method that we presented here we need to consider powers of the matrices themselves without the absolute values.}
\begin{equation}
\widetilde{\mathsf{Q}}(m;s,N)=\mathbb{E}_{N}^{(s)} \left( \Tr \left( \mathbf{H}_{N}^{m+2}+\mathbf{H}_{N}^{m}\right) \right).
\label{eq:defnQtilde}
\end{equation}
Then integrating by parts in a similar way as equation \eqref{Qtoa(k)} gives
\begin{equation}
    \widetilde{\mathsf{Q}}(m;s,N) = 2\Re(s)\gamma_{N-1,s}^2\cdot\frac{1}{m+1}\left[a(m+1;s,N)+(-1)^ma(m+1;\overline{s},N)\right]
\end{equation}
where $\overline{s}$ denotes the complex conjugate of s. Hence, noting that the coefficients $C_1$, $C_3
$ and $C_4$ are invariant under $s\leftrightarrow \overline{s}$ whereas $C_2$ changes sign, we immediately deduce the following recurrence for $\widetilde{\mathsf{Q}}(m;s,N)$
\begin{equation}
    D_1(m)\widetilde{\mathsf{Q}}(m;s,N)+D_2(m)\widetilde{\mathsf{Q}}(m-1;s,N)+D_3(m)\widetilde{\mathsf{Q}}(m-2;s,N) + D_4(m)\widetilde{\mathsf{Q}}(m-4;s,N) = 0
\label{tildediff}
\end{equation}
where
\begin{align*}
    D_1(m) &= (m+2)(4\Re\left( s\right)^2 - (m+1)^2), \\
    D_2(m) &= 4\Im\left( s\right)(-\Re\left( s\right)-N)(2m+1), \\
    D_3(m) &= (m-1)\left[-2m^2+4\Im\left( s\right)^2+4N(-N-2\Re\left( s\right))\right], \\
    D_4(m) &= -(m-1)(m-2)(m-3).
\end{align*}
Finally, one may note that equation \eqref{tildediff} reduces to \eqref{eq:13} by taking $s$ real and substituting $m = 2k+2$.
\label{rmk:3.2}
\end{rmk}

\begin{cor}Let $s\in\mathbb{R}$ and $s>0$,
and for $k$ in the strip $-\frac{1}{2}<\Re(k)<s-\frac{1}{2}$ define
\begin{align*}
\mathsf{J}(k;s,N)=\frac{\mathsf{Q}(k;s,N)}{\Gamma\left(k+\frac{1}{2}\right) \Gamma\left(s-k-\frac{1}{2}\right)}. 
\end{align*}
Then when $s>3$, $\mathsf{J}(k;s,N)$ satisfies the following difference equation for $k \in \left(\frac{1}{2},s-\frac{5}{2}\right)$:
\begin{multline}
(2k+4)(2s+2k+3)\mathsf{J}(k+1;s,N)+2k(2k+1-2s)\mathsf{J}(k-1;s,N) \\ = 2\left(2N(N+2s)+(2k+2)^2\right)\mathsf{J}(k;s,N). \label{eqJ}
\end{multline}
\end{cor}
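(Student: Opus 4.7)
The plan is to derive the stated difference equation for $\mathsf{J}(k;s,N)$ as a direct algebraic consequence of the three-term recurrence for $\mathsf{Q}(k;s,N)$ given by Proposition \ref{DiffEqProp}, together with the functional equation $\Gamma(z+1)=z\Gamma(z)$. Specifically, I would start by writing $\mathsf{Q}(k;s,N)=\Gamma\!\left(k+\tfrac{1}{2}\right)\Gamma\!\left(s-k-\tfrac{1}{2}\right)\mathsf{J}(k;s,N)$, substituting this into
\begin{equation*}
R(k)\mathsf{Q}(k+1;s,N)+T(k)\mathsf{Q}(k;s,N)+S(k)\mathsf{Q}(k-1;s,N)=0,
\end{equation*}
and then dividing through by $\Gamma(k+\tfrac{1}{2})\Gamma(s-k-\tfrac{1}{2})$.

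The shifts in the argument of $\mathsf{J}$ produce the gamma-ratios
\begin{equation*}
\frac{\Gamma(k+\tfrac{3}{2})\Gamma(s-k-\tfrac{3}{2})}{\Gamma(k+\tfrac{1}{2})\Gamma(s-k-\tfrac{1}{2})}=\frac{2k+1}{2s-2k-3},\qquad \frac{\Gamma(k-\tfrac{1}{2})\Gamma(s-k+\tfrac{1}{2})}{\Gamma(k+\tfrac{1}{2})\Gamma(s-k-\tfrac{1}{2})}=\frac{2s-2k-1}{2k-1},
\end{equation*}
which I would insert into the coefficients. The crucial observation is that $R(k)$ factors as $(2k+4)(2s-2k-3)(2s+2k+3)$, so the factor $2s-2k-3$ in the denominator of the first gamma-ratio cancels cleanly; likewise $S(k)=-(2k+1)(2k)(2k-1)$ cancels the $2k-1$ in the denominator of the second ratio. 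After these cancellations every term acquires a common factor $(2k+1)$, which can be divided out.

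Collecting what remains, the coefficient of $\mathsf{J}(k+1;s,N)$ becomes $(2k+4)(2s+2k+3)$ and the coefficient of $\mathsf{J}(k-1;s,N)$ becomes $-(2k)(2s-2k-1)=2k(2k+1-2s)$, while the middle term contributes $-2(2N(N+2s)+(2k+2)^2)\mathsf{J}(k;s,N)$. Transposing the middle term to the right-hand side yields exactly \eqref{eqJ}. The range $k\in(\tfrac{1}{2},s-\tfrac{5}{2})$ and the restriction $s>3$ are inherited verbatim from Proposition \ref{DiffEqProp}.

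There is no conceptual obstacle here — this is a routine algebraic manipulation — so the only point requiring care is bookkeeping: one must track the factor $(2s-2k-3)$ inside $R(k)$ and ensure that the three gamma-ratios are evaluated at the correct shifted arguments, so that the desired cancellations indeed occur and the common factor $(2k+1)$ can be removed from all three terms.
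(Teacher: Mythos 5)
Your computation is correct and is precisely what the paper means by its one-line proof ``Immediate using equation \eqref{eq:13}'': substitute $\mathsf{Q}=\Gamma(k+\tfrac12)\Gamma(s-k-\tfrac12)\mathsf{J}$ into the three-term recurrence, use $\Gamma(z+1)=z\Gamma(z)$ to reduce the shifted gamma-ratios, exploit the factorization $R(k)=(2k+4)(2s-2k-3)(2s+2k+3)$ and $S(k)=-(2k+1)(2k)(2k-1)$ to cancel the denominators, and divide out the common factor $2k+1$. The bookkeeping you describe checks out, so your proposal matches the paper's intended argument exactly.
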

\begin{proof}
Immediate using equation \eqref{eq:13}.
\end{proof}

\section{Proof of the main result}
We begin with the following lemma.
\begin{lem}Let $s\in\mathbb{R}$ and $s>3$, then for $k \in \left(-\frac{1}{2},s-\frac{1}{2}\right)$,\ $\mathsf{J}(k;s,N)$ is a polynomial in $k$ of degree $N-1$.
\label{poly1lemma}
\end{lem}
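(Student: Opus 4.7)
The plan is to establish polynomiality by directly computing the integral representation of $\mathsf{Q}(k;s,N)$. Starting from the integration-by-parts identity \eqref{Qtoa(k)} (valid because $\bm{\rho}_{N}^{(s)}$ is symmetric in $x$ when $s$ is real), I would write
\[
\mathsf{Q}(k;s,N) \;=\; \frac{4s\,\gamma_{N-1,s}^{2}}{2k+1}\int_{0}^{\infty} x^{2k+1}\, p_{N}^{(s,N)}(x)\, p_{N-1}^{(s,N)}(x)\,(1+x^{2})^{-N-s}\,dx.
\]
Since $s\in\mathbb{R}$, the parity identity $p_{m}^{(s,N)}(-x) = (-1)^{m} p_{m}^{(s,N)}(x)$ implies that $p_{N}^{(s,N)} p_{N-1}^{(s,N)}$ is an \emph{odd} polynomial of degree $2N-1$, so
\[
p_{N}^{(s,N)}(x)\, p_{N-1}^{(s,N)}(x) \;=\; \sum_{j=0}^{N-1} c_{j}\, x^{2j+1}
\]
for some constants $c_{j}=c_{j}(s,N)$ with $c_{N-1}=1$.

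Next, substituting $u=x^{2}$ in each monomial integral yields a Beta-function evaluation
\[
\int_{0}^{\infty} x^{2k+2j+2}(1+x^{2})^{-N-s}\,dx \;=\; \frac{1}{2}\,\frac{\Gamma(k+j+\tfrac{3}{2})\,\Gamma(N+s-k-j-\tfrac{3}{2})}{\Gamma(N+s)}.
\]
Dividing through by $\Gamma(k+\tfrac{1}{2})\Gamma(s-k-\tfrac{1}{2})$ and using the Pochhammer-type identities
\[
\frac{\Gamma(k+j+\tfrac{3}{2})}{\Gamma(k+\tfrac{1}{2})} = \prod_{\ell=0}^{j}\bigl(k+\tfrac{1}{2}+\ell\bigr),\qquad \frac{\Gamma(N+s-k-j-\tfrac{3}{2})}{\Gamma(s-k-\tfrac{1}{2})} = \prod_{r=0}^{N-j-2}\bigl(s-k-\tfrac{1}{2}+r\bigr),
\]
the first factor $k+\tfrac{1}{2}$ cancels the denominator $2k+1=2(k+\tfrac{1}{2})$, producing
\[
\mathsf{J}(k;s,N) \;=\; \frac{s\,\gamma_{N-1,s}^{2}}{\Gamma(N+s)}\sum_{j=0}^{N-1} c_{j}\prod_{\ell=1}^{j}\bigl(k+\tfrac{1}{2}+\ell\bigr)\prod_{r=0}^{N-j-2}\bigl(s-k-\tfrac{1}{2}+r\bigr).
\]
Each summand is a polynomial in $k$ of degree $j+(N-j-1)=N-1$, so $\mathsf{J}(k;s,N)$ is a polynomial in $k$ of degree at most $N-1$.

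It remains to verify that the degree is exactly $N-1$. The coefficient of $k^{N-1}$ in the $j$-th summand is $(-1)^{N-j-1}$, so the top coefficient of $\mathsf{J}$ is proportional to $\sum_{j=0}^{N-1}(-1)^{N-j-1}c_{j}$. Evaluating the defining expansion at $x=i$ gives $p_{N}^{(s,N)}(i)\, p_{N-1}^{(s,N)}(i) = i\sum_{j}(-1)^{j}c_{j}$, so this sum is, up to a nonzero sign, $i^{-1}\, p_{N}^{(s,N)}(i)\, p_{N-1}^{(s,N)}(i)$. The apparent singularity at $x=i$ in \eqref{eq:pseudojacobipolynomials} is resolved by the identity $1+ix = i(x-i)$, which cancels the zero of $(x-i)^{m}$ against the leading term of the ${}_2F_1$; the resulting limit is
\[
p_{m}^{(s,N)}(i) \;=\; (2i)^{m}\,\frac{(s+N-m)_{m}}{(2s+2N-2m)_{m}},
\]
which is nonzero for $s>0$. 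This confirms the leading coefficient of $\mathsf{J}(k;s,N)$ is nonzero, hence the degree equals $N-1$. This nonvanishing of the leading coefficient is the main technical point, with the rest reducing to a direct Beta-function calculation.
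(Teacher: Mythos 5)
Your proof is correct and follows essentially the same approach as the paper: reduce via parity to an odd polynomial product, evaluate each monomial integral as a Beta function, and observe that after dividing by $\Gamma(k+\tfrac12)\Gamma(s-k-\tfrac12)$ each summand becomes a product of Pochhammer-type factors of total degree $N-1$ in $k$. Your additional verification that the leading coefficient is nonzero — via the closed-form evaluation $p_m^{(s,N)}(i)=(2i)^m\frac{(s+N-m)_m}{(2s+2N-2m)_m}$ — is a welcome extra: the paper asserts the degree is exactly $N-1$ without explicitly ruling out cancellation of the top coefficients among the summands.
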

\begin{proof}
We note that when the parameter $s\in \mathbb{R}$ we have ${p_N^{(s,N)}}(-x){p_{N-1}^{(s,N)}}(-x)=-{p_N^{(s,N)}}(x){p_{N-1}^{(s,N)}}(x)$ so that ${p_N^{(s,N)}}{p_{N-1}^{(s,N)}}$ is a polynomial with terms of odd degree only. We can then compute for an odd integer $l$
\begin{align}
\int _{\mathbb{R^{+}} }\frac {x^{2k+1+l}} {(1+x^2)^{s+N}}dx= \frac{\Gamma\left(k+1+\frac{l}{2}\right) \Gamma\left(s+N-k-1-\frac{l}{2}\right)}{2\Gamma\left(s+N\right)}
\end{align}
as in equation (18) on page 953 in \cite{Polyanin}.
Plugging this integral back into the expression for $\mathsf{J}(k;s,N)$, noting the equality in \eqref{Qtoa(k)}, we see that $\mathsf{J}(k;s,N)$ is a linear combination of polynomials of degree $N-1$ in the variable $k$ and hence a polynomial of degree $N-1$ itself.
\end{proof}

Next we will see that polynomial solutions to \eqref{eqJ} are in fact uniquely determined up to multiplication by a constant, which allows us to deduce Theorem \ref{MainTheorem} by simply matching up two such solutions using a constant\footnote{In fact, it is possible to circumvent the use of Lemma \ref{poly2lemma} by fixing $N$, taking $s$ large enough with respect to $N$ and arguing differently. Then, the $N$-dependent restriction on $s$ can be removed by analytic continuation at the end. Nevertheless, we believe that both the statement, in particular because it applies uniformly in $N$, and proof of Lemma \ref{poly2lemma} are of independent interest and so we present it here.}. Note that the following result is not unique to the generalized Cauchy ensemble. As already mentioned in the introduction, one can use analogous arguments to show that the difference equations satisfied by appropriately rescaled moments of the Gaussian and Laguerre ensembles, and an appropriately rescaled difference of moments for the Jacobi ensemble (these are given explicitly in \cite{CundenMezzadriOConnellSimm}), also admit unique polynomial solutions up to multiplication by a constant. 

\begin{lem} Let $s>0$. Let $\mathfrak{I}=(r_1,r_2)\subset\mathbb{R}$ be an interval, with $r_2-r_1>2$. Consider a non-empty interval $\mathfrak{I}^*\subset(r_1+1,r_2-1)$ and suppose that the function f is a polynomial of degree $N-1$ \footnote{If we instead fix the parameter $N$ in the difference equation \eqref{eq:frecurrence} but let $d$ be the degree  of the polynomial $f$, the same argument shows that there does not exist a non-trivial polynomial solution of \eqref{eq:frecurrence} with degree $d\neq N-1$.} on $\mathfrak{I}$ and moreover satisfies the following difference equation $\forall x\in \mathfrak{I}^*$:
\begin{multline}
 (2x+4)(2s+2x+3)f(x+1)+2x(2x+1-2s)f(x-1)\\= 2\left(2N(N+2s)+(2x+2)^2\right)f(x).
 \label{eq:frecurrence}
\end{multline}
Then, f is uniquely determined on $\mathfrak{I}^*$ up to multiplication by a constant.
\label{poly2lemma}
\end{lem}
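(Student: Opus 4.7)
The plan is to first observe that \eqref{eq:frecurrence}, which a priori holds only on $\mathfrak{I}^*$, automatically extends to a polynomial identity on all of $\mathbb{R}$: since $f$ is assumed polynomial, both sides of \eqref{eq:frecurrence} are polynomials in $x$, and two polynomials that agree on a non-empty open interval agree identically. This reduces the question to a purely algebraic statement about polynomial solutions of the recurrence, and the heart of the argument will be to show that $N-1$ is the only admissible degree for a non-trivial polynomial solution.

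More precisely, I would establish the following auxiliary claim: if $h$ is a non-zero polynomial of some degree $d \geq 0$ satisfying \eqref{eq:frecurrence}, then $d = N-1$. When $h$ has degree $d$, both sides of \eqref{eq:frecurrence} are polynomials of degree $d+2$. Expanding the coefficients $A(x) = (2x+4)(2s+2x+3)$, $B(x) = 2(2N(N+2s)+(2x+2)^2)$, $C(x) = 2x(2x+1-2s)$ together with the finite Taylor expansions of $h(x\pm 1)$ to three terms, a direct check shows that comparing coefficients of $x^{d+2}$ and $x^{d+1}$ on both sides produces tautologies. The first non-trivial constraint comes from the coefficient of $x^d$, which, after elementary simplification, reduces to
\begin{equation*}
4(d+1-N)(d+1+N+2s)\,h_d \;=\; 0,
\end{equation*}
where $h_d$ is the leading coefficient of $h$. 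Since $s>0$ and $d\geq 0$, the factor $d+1+N+2s$ is strictly positive, so $h_d \neq 0$ forces $d = N-1$, proving the claim.

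Uniqueness up to a multiplicative constant then follows by a standard linear combination argument: given two polynomial solutions $f_1, f_2$ of degree $N-1$ with non-zero leading coefficients $\alpha_1,\alpha_2$, the combination $h := \alpha_2 f_1 - \alpha_1 f_2$ is a polynomial solution of \eqref{eq:frecurrence} of degree at most $N-2$, and the auxiliary claim forces $h \equiv 0$, yielding $f_1 = (\alpha_1/\alpha_2)\,f_2$ on $\mathfrak{I}^*$. This argument also immediately gives the statement of the footnote that no non-trivial polynomial solution of degree $\neq N-1$ exists. The principal, and only real, obstacle is the coefficient-of-$x^d$ computation: it is a routine but somewhat careful bookkeeping exercise, and the clean factorisation $(d+1-N)(d+1+N+2s)$ that drops out of it is precisely what makes the positivity argument work uniformly for all $s>0$ and $d\geq 0$.
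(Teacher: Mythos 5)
Your proof is correct, and at its core it rests on the same key computation as the paper's: substituting a polynomial of degree $d$ into \eqref{eq:frecurrence} and reading off the coefficient of $x^d$. The paper phrases the rest as a linear-algebra argument — it assembles the coefficient constraints into an $(N+2)\times N$ matrix $\mathbf{A}$ acting on the coefficient vector $\mathbf{a}$, observes that the ``diagonal'' entry (contribution to $b_i$ from $a_i$) is $4i(i-1)+(8s+12)i-4N(N+2s)+8s+4$, which vanishes iff $i=N-1$ when $s>0$, and then reads off from the triangular structure that $\mathbf{A}$ has rank $N-1$ and hence a one-dimensional null space. You instead extract from the very same quantity, after rewriting it as the factorisation $4(d+1-N)(d+1+N+2s)$, the cleaner statement that \emph{any} non-zero polynomial solution must have degree exactly $N-1$ (since the second factor is strictly positive), and then deduce uniqueness by the standard trick of forming the leading-coefficient cancellation $h=\alpha_2 f_1-\alpha_1 f_2$ of degree $\le N-2$, which must vanish. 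The two are mathematically equivalent, but your packaging has two small advantages worth noting: the explicit factorisation makes the role of the hypothesis $s>0$ transparent at a glance, and the ``no solutions of degree $d\neq N-1$'' formulation proves the paper's footnote to Lemma \ref{poly2lemma} directly as part of the main argument rather than as an afterthought. I verified the arithmetic: with $A(x)=4x^2+(4s+14)x+(8s+12)$, $C(x)=4x^2+(2-4s)x$ and $B(x)=8x^2+16x+4N(N+2s)+8$, the coefficients of $x^{d+2}$ and $x^{d+1}$ in $A(x)h(x+1)+C(x)h(x-1)-B(x)h(x)$ indeed cancel identically, and the coefficient of $x^d$ equals $\bigl(4d^2+(8s+8)d+8s+4-4N(N+2s)\bigr)h_d=4(d+1-N)(d+1+N+2s)h_d$, matching both your claimed factorisation and the paper's displayed expression.
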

\begin{proof}
Assume that for $\mathbf{a}=(a_{N-1},\ldots,a_0)$ we have
\begin{align*}
f(x)=a_{N-1}x^{N-1}+\ldots+a_{1}x+a_0 , \ \forall x\in \mathfrak{I}.
\end{align*}
Substituting this into the difference equation \eqref{eq:frecurrence} and collecting terms, we get an equation of the form:
\begin{align*}
b_{N+1}x^{N+1}+\ldots+b_{1}x+b_0=0, \ \forall x\in \mathfrak{I}^*.
\end{align*}
Since this equation holds on an interval, it implies that $b_i=0$ for all $i$ and hence we obtain a system of $N+2$ linear equations for $\mathbf{a}$. A simple computation reveals that the contribution to $b_i$ from $a_j$ is zero for $j<i$ and the contribution to $b_i$ from $a_i$ is:
\begin{equation*}
    4i(i-1)+(8s+12)i-4N(N+2s)+8s+4.
\end{equation*}
In particular, when $s>0$, the contribution to $b_i $ from $a_i$ is zero if and only if $i=N-1$.
Hence, we have a vector equation of the form:
\begin{align*}
\mathbf{A}\mathbf{a}=0
\end{align*}
where $\mathbf{A}$ is an $(N+2)\times N$ matrix with first 3 rows zero, and for $i>3$, $\alpha_{i,i-2} \neq 0$ and $\alpha_{ij}=0$ for $j>i-2$,
\begin{equation}
  \mathbf{A} = \begin{bmatrix} 
    0 & 0 & 0 & \ldots & 0 \\
    0 & 0 & 0 & \ldots & 0 \\
    0 & 0 & 0 &\ldots & 0 \\
    \alpha_{4,1} & \alpha_{4,2} & 0 & \ldots & 0\\
    \alpha_{5,1} & \alpha_{5,2} & \alpha_{5,3} & \ldots & 0\\
    \vdots & &  & \ddots & \vdots \\
    \alpha_{N+2,1} & \ldots & &   & \alpha_{N+2,N} 
    \end{bmatrix}.
\end{equation}
In particular, the null space of $\mathbf{A}$ has dimension $1$ and $f$ is uniquely determined on $\mathfrak{I}^*$ up to multiplication by a constant.

\end{proof}

We are finally in a position to prove our main result.

\begin{proof}[Proof of Theorem \ref{MainTheorem}]
From our definitions of $\mathsf{Q}(k;s,N)$ and $\mathsf{J}(k;s,N)$, this is equivalent to proving that for $s\in\mathbb{R}$, $s>0$ and $-\frac{1}{2}<\Re\left( k\right)<s-\frac{1}{2}$ we have:
\begin{align}
\mathsf{J}(k;s,N)=\frac{i^{1-N}}{\Gamma\left(s+\frac{3}{2}\right)\Gamma\left(-s-\frac{1}{2}\right) 2\sqrt{\pi}}  \Gamma\left(\frac{1}{2}-s-N\right)s(2s+N) \times S_{N-1}\left(x;1,\frac{1}{2}+s,1,\frac{1}{2}+s\right).
\label{JMainResult}
\end{align}
For fixed $s>3$, the result follows immediately on the interval $k \in\left(\frac{1}{2},s-\frac{5}{2}\right)$ by noting that the given continuous-Hahn polynomial satisfies the difference equation \eqref{eq:frecurrence} on this interval(see \cite{HypergeometricOrthogonalPolynomials}), upon computing the initial condition and combining Lemmas \ref{poly1lemma} and \ref{poly2lemma}. Since $\mathsf{J}(k;s,N)$ and right hand-side of \eqref{JMainResult} are two functions that are analytic in the strip $-\frac{1}{2}<\Re\left( k\right)<s-\frac{1}{2}$ and they agree on a set which has a limit point in this strip, they agree on the whole strip so that the analytic extension of $\mathsf{J}(k;s,N)$ to the whole of $\mathbb{C}$ is also this polynomial.

We now fix $k\in \mathbb{C}$ such that $\Re(k)+\frac{1}{2}>0$ and remove the restriction $s >3$ to extend the result to the full range $s>\Re(k) + \frac{1}{2}>0$. We begin by noting that from the definition \eqref{eq:pseudojacobipolynomials} of the Pseudo-Jacobi polynomials, for fixed $x,m$ and $N$, the function $s \mapsto p_m^{(s,N)}(x)$ has an analytic continuation to $\left\{ s \in \mathbb{C} : \Re (s) > -\frac{1}{2} \right\} $, given by\footnote{Note that, this is not the same as the pseudo-Jacobi polynomial with complex parameter, which is not holomorphic in $s$.}:
\begin{equation*}
    s \mapsto (x-i)^m\  _2F_1\left[\begin{matrix}
    -m, s+N-m \\
    2s + 2N - 2m   
    \end{matrix} ; \frac{2}{1+ix}\right].
\end{equation*}
Similarly the functions $s \mapsto \frac{d}{dx}p_m^{(s,N)}(x)$, $s \mapsto \phi_N^{(s)}(x)$ and $s \mapsto \gamma_{N-1,s}^2$ can be analytically continued to $\left\{ s \in \mathbb{C} : \Re (s) > -\frac{1}{2} \right\} $. Thus, via an application of Fubini's theorem and Morera's theorem, we see from the formula \eqref{Qtoa(k)}, that the function $s \mapsto \mathsf{Q}(k;s,N)$\footnote{This can also be seen by applying Morera's and Fubini's theorems directly to the $N$-dimensional integral that represents $\mathsf{Q}(k;s,N)$.}, and thus also $\mathsf{J}(k;s,N)$, can be analytically continued to $\left\{ s \in \mathbb{C} : \Re (s) > \Re(k) + \frac{1}{2} \right\} $. Note that the right hand side of \eqref{JMainResult} is analytic as a function of $s$, for $\Re(s) > -\frac{1}{2}$, and by the above result agrees with $\mathsf{J}(k;s,N)$ for $\Re(s) > \max\left(\Re(k)+\frac{1}{2},3\right)$. Hence using the analytic continuation of $ \mathsf{J}(k;s,N)$ we see that the equality \eqref{JMainResult} holds for all $s>\Re(k) + \frac{1}{2}>0$.

Since the polynomials $S_n$ form a family of orthogonal polynomials on a positive weight along the real axis, its zeros are all real (see \cite{CundenMezzadriOConnellSimm}) so that all complex zeros are in the form $k=ix-1$ with $x\in \mathbb{R}$. For the reflection property we simply use the property of continuous-Hahn polynomials $S_n$ that for any $a,b>0$ (see \cite[Theorem 6.6]{CundenMezzadriOConnellSimm} for more details on this):
\begin{equation}
    S_n(-x;a,b,a,b)=(-1)^n S_n(x;a,b,a,b).
\end{equation}
\end{proof}
\begin{rmk}
Using the hypergeometric definition of continuous-Hahn polynomials in \eqref{eq:2} and the fact that for $l<n$,
\begin{align*}
\frac{{(a+b)_n}{(a+c)_n}}{{(a+b)_l}{(a+c)_l}}={(a+b+l)_{n-l}}{(a+c+l)_{n-l}},
\end{align*}
we can see that for fixed $k\in\mathbb{C}$ with $\Re(k)>-\frac{1}{2}$; $\frac{\mathsf{Q}(k;s,N)\Gamma\left(s+\frac{3}{2}\right)\Gamma\left(-s-\frac{1}{2}\right)}{\Gamma\left(s-k-\frac{1}{2}\right)\Gamma\left(\frac{1}{2}-s-N\right)}$ is not only analytic but is also a polynomial in the parameter $s$ for $s>\Re(k)+\frac{1}{2}$.
\end{rmk}

We close this section with the following result on certain moments of the circular ensemble \eqref{eq:haarmeasure}.
\begin{cor}
Let $s>0$ and denote by $e^{i\theta_1}, \ldots, e^{i\theta_N}$ the eigenvalues of a random unitary matrix $\mathbf{U}\in \mathbb{U}(N)$ distributed according to the measure in \eqref{eq:haarmeasure}. Define
\begin{equation}
    \mathsf{T}(k;s,N) = \widetilde{\mathbb{E}}_N^{(s)}\left(\sum_{j=1}^{N}\left|\tan \left(\frac{\theta_j}{2}\right)\right|^{2k}\sec^2\left(\frac{\theta_j}{2}\right)\right),
\end{equation}
where $\widetilde{\mathbb{E}}_N^{(s)}$ denotes the expectation with respect to the measure in \eqref{eq:haarmeasure}, so that $\mathsf{T}(k;s,N)$ exists for $-\frac{1}{2}<\Re\left( k\right)<s-\frac{1}{2}$. Then, we have that:
\begin{align}
    \mathsf{T}(k;s,N) =\frac{\Gamma\left(k+\frac{1}{2}\right) \Gamma\left(s-k-\frac{1}{2}\right)}{\Gamma\left(s+\frac{3}{2}\right)\Gamma\left(-s-\frac{1}{2}\right) \sqrt{\pi}} \frac{i^{1-N}}{2} \Gamma\left(\frac{1}{2}-s-N\right)s(2s+N)
    \times S_{N-1}\left(x;1,\frac{1}{2}+s,1,\frac{1}{2}+s\right),
\end{align}
where $k=ix-1$ and $S_n(x;a,b,c,d)$ denotes the continuous-Hahn polynomial with parameters $a,b,c,d$ and $n$. In particular, $\frac{\mathsf{T}(k;s,N)}{\Gamma\left(k+\frac{1}{2}\right) \Gamma\left(s-k-\frac{1}{2}\right)} $ extends to an analytic function in $\mathbb{C}$ that is invariant up to a change of sign under reflection $k\rightarrow -k-2$, with all complex zeros lying on the vertical line $\Re(k)=-1$.
\begin{proof}
Note that:
\begin{align*}
    \mathsf{Q}(k;s,N)=\mathbb{E}_{N}^{(s)} \left( \Tr \left( |\mathbf{H}|^{2k+2}+|\mathbf{H}|^{2k}\right) \right)
    &=\mathbb{E}_N^{(s)}\left(\sum_{j=1}^{N}\left|\lambda_j\right|^{2k}\left(1+{\lambda_j}^2\right)\right)\\
    &=\widetilde{\mathbb{E}}_N^{(s)}\left(\sum_{j=1}^{N}\left|\tan \left(\frac{\theta_j}{2}\right)\right|^{2k}\sec^2\left(\frac{\theta_j}{2}\right)\right),
\end{align*}
where $\lambda_1, \ldots, \lambda_N$ denote the eigenvalues of $\mathbf{H}$ and the last equality is obtained by applying the inverse of the Cayley transform given in \eqref{cayley}. Now the result is immediate using Theorem \ref{MainTheorem}.
\end{proof}
\end{cor}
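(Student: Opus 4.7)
The plan is to recognise $\mathsf{T}(k;s,N)$ as precisely $\mathsf{Q}(k;s,N)$ under the Cayley change of variables \eqref{cayley}, so that the result reduces immediately to Theorem \ref{MainTheorem} with no new analytic content.

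The one computation required is how the Cayley transform acts on eigenvalues. If $\lambda\in\mathbb{R}$ is an eigenvalue of $\mathbf{X}$ and $e^{i\theta}=(i-\lambda)/(i+\lambda)$ is the corresponding eigenvalue of $\mathbf{U}$, then substituting $\lambda=\tan(\theta/2)$ and clearing the common factor $\cos(\theta/2)$ gives numerator $i\cos(\theta/2)-\sin(\theta/2)=i\,e^{i\theta/2}$ and denominator $i\cos(\theta/2)+\sin(\theta/2)=i\,e^{-i\theta/2}$, so the ratio is $e^{i\theta}$, confirming $\lambda=\tan(\theta/2)$. In particular $1+\lambda^{2}=\sec^{2}(\theta/2)$, so the random variable $\sum_{j}|\lambda_j|^{2k}(1+\lambda_j^{2})$ is transformed term-by-term into $\sum_{j}|\tan(\theta_j/2)|^{2k}\sec^{2}(\theta_j/2)$.

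Since by construction the Cayley transform pushes the generalized Cauchy probability measure \eqref{eq:1} on $\mathbb{H}(N)$ forward to the measure \eqref{eq:haarmeasure} on $\mathbb{U}(N)$, the induced map on the eigenvalue point processes is measure-preserving. Hence
\begin{equation*}
\mathsf{Q}(k;s,N) \;=\; \mathbb{E}_N^{(s)}\!\left[\sum_{j=1}^{N} |\lambda_j|^{2k}(1+\lambda_j^{2})\right] \;=\; \widetilde{\mathbb{E}}_N^{(s)}\!\left[\sum_{j=1}^{N} \left|\tan(\theta_j/2)\right|^{2k} \sec^{2}(\theta_j/2)\right] \;=\; \mathsf{T}(k;s,N),
\end{equation*}
where $\lambda_1,\dots,\lambda_N$ are the eigenvalues of $\mathbf{H}$.

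Substituting this equality into the closed-form identity of Theorem \ref{MainTheorem} yields the claimed expression for $\mathsf{T}(k;s,N)$. The remaining assertions — analyticity of $\mathsf{T}(k;s,N)/[\Gamma(k+\frac{1}{2})\Gamma(s-k-\frac{1}{2})]$ on all of $\mathbb{C}$, the reflection symmetry $k\mapsto -k-2$ up to a sign, and the fact that the complex zeros all lie on $\Re(k)=-1$ — are then inherited verbatim from the corresponding statements in Theorem \ref{MainTheorem}. The only (very mild) obstacle is verifying the Cayley identity $\lambda=\tan(\theta/2)$; everything else is formal bookkeeping, and indeed the corollary is essentially a one-line consequence of the main theorem.
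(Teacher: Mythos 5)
Your proof is correct and follows essentially the same approach as the paper: identify $\mathsf{T}(k;s,N)$ with $\mathsf{Q}(k;s,N)$ via the Cayley transform's action on eigenvalues, and then invoke Theorem \ref{MainTheorem}. You actually spell out the computation $\lambda=\tan(\theta/2)$ and $1+\lambda^2=\sec^2(\theta/2)$ a bit more explicitly than the paper does, but the argument is the same.
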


\section{A differential equation for the one point density}
In this section we obtain a third order ordinary differential equation (ODE) for the one-point density function $\bm{\rho}_N^{(s)} \left( x\right)$. Such ODEs have been used to prove optimal rates of convergence towards a limit density, see \cite{GotzeTikhomirov}. An analogous ODE was obtained for the GUE, LUE and JUE cases in \cite[§6]{CundenMezzadriOConnellSimm} by using the difference equations for moments\footnote{In the case of the GUE and LUE these ODEs were first obtained by G\"otze and Tikhomirov in \cite{GotzeTikhomirov}.}. We use a similar approach here.

We define the Mellin Transform of $f(x)$ by:
\begin{equation}
    \left[\mathcal{M}f\right](z)=\int _{\mathbb{R^{+}}} x^{z-1}f(x)dx,
\end{equation}
and note that if the integral converges on a strip $D$ of the complex plane we have the following properties, see for example \cite{Mellin}: 
\begin{align}
    \left[\mathcal{M}f^{(m)}\right](z)=(-1)^m(z-m)_m\left[\mathcal{M}f\right](z-m), &&  \text{for} \ z-m \in D, \label{eq:mellin1} \\
    \left[\mathcal{M}\left({x^m}f\right)\right](z)=\left[\mathcal{M}f\right](z+m), &&  \text{for} \ z+m \in D.
    \label{eq:mellin2}
\end{align}

\begin{prop}
Let $s>-\frac{1}{2}$. Then, the one point density function $\bm{\rho}_N^{(s)}(x)$, satisfies the third order differential equation $\mathsf{D}\bm{\rho}_N^{(s)}(x)=0$ where $\mathsf{D}$ is the differential operator given by:
\begin{multline}
(\mathsf{D}y)(x) = (1+x^2)^3 y^{\prime \prime \prime}(x) + 8x(1+x^2)^2 y^{\prime \prime}(x) \\ + 2(1+x^2)(3+2N(N+2s)+(7-2s^2)x^2)y^\prime(x) \\+ 4x(1+s^2+2N(N+2s) + (1-s^2)x^2)y(x).
\end{multline}
\begin{proof}
We note that for $s>3$, from definition of $\mathsf{Q}(k;s,N)$:
\begin{equation*}
\mathsf{Q}(k;s,N)=2\left(\left[\mathcal{M}\bm{\rho}_N^{(s)}\right](2k+3)+\left[\mathcal{M}\bm{\rho}_N^{(s)}\right](2k+1)\right).
\end{equation*}
 Hence, letting $z=2k+2$, by \eqref{eq:13} we get that:
\begin{align*}
(z+2)(4s^2-(z+1)^2)\left(\left[\mathcal{M}\bm{\rho}_N^{(s)}\right](z+3)+\left[\mathcal{M}\bm{\rho}_N^{(s)}\right](z+1)\right)\\
-2(z-1)\left(2N(N+2s)+z^2\right)\left(\left[\mathcal{M}\bm{\rho}_N^{(s)}\right](z+1)+\left[\mathcal{M}\bm{\rho}_N^{(s)}\right](z-1)\right)\\
-(z-1)(z-2)(z-3)\left(\left[\mathcal{M}\bm{\rho}_N^{(s)}\right](z-1)+\left[\mathcal{M}\bm{\rho}_N^{(s)}\right](z-3)\right)=0.
\end{align*}
Taking the inverse Mellin transform of both sides and by applying the properties of the Mellin transform discussed above, we get the desired result for $s>3$. Using the fact that the functions  $ s \mapsto \frac{d^k}{dx^k}p_N^{(s,N)}(x)$, for fixed $x, N$ and $k$, have analytic continuations to $\left\{ s \in \mathbb{C} : \Re (s) > -\frac{1}{2} \right\}$, as discussed in the proof of Theorem \ref{MainTheorem}, we see that the function $s \mapsto \frac{d
^k}{dx^k}\bm{\rho}_N^{(s)}(x)$ may also be analytically continued to this set for $k\in \{0,1,2,3\}$ and thus we can extend the result to $s>-\frac{1}{2}$.
\end{proof}
\end{prop}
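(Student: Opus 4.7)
The plan is to extract the differential equation from the recurrence \eqref{eq:13} by passing through the Mellin transform, which converts the shifts in $k$ into powers of $x$ and the polynomial coefficients in $k$ into derivatives of the density.

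First, using the symmetry of $\bm{\rho}_N^{(s)}$ about the origin for $s \in \mathbb{R}$, I would rewrite the definition \eqref{eq:defnQ} as
\begin{equation*}
\mathsf{Q}(k;s,N) = 2\left(\left[\mathcal{M}\bm{\rho}_N^{(s)}\right](2k+3) + \left[\mathcal{M}\bm{\rho}_N^{(s)}\right](2k+1)\right),
\end{equation*}
substitute into \eqref{eq:13} with the change of variables $z = 2k+2$, and collect terms. This produces, for $s > 3$ and $z$ in a suitable vertical strip, a linear identity among the four Mellin values of $\bm{\rho}_N^{(s)}$ at $z\pm 1$ and $z\pm 3$, with polynomial-in-$z$ coefficients inherited from $R(k),T(k),S(k)$.

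Next, I would apply the Mellin calculus rules \eqref{eq:mellin1}--\eqref{eq:mellin2} to rewrite each $\left[\mathcal{M}\bm{\rho}_N^{(s)}\right](z+2j-1)$ as a Mellin transform at a common argument (naturally $z-3$) of a function involving a power of $x$ acting on $\bm{\rho}_N^{(s)}$; the polynomial multipliers in $z$ are then absorbed into derivative operators via $\left[\mathcal{M}f^{(m)}\right](z)=(-1)^m (z-m)_m \left[\mathcal{M}f\right](z-m)$. The entire identity reorganizes into the form $\left[\mathcal{M}(\mathsf{D}\bm{\rho}_N^{(s)})\right](z-3)=0$ for an explicit third-order operator $\mathsf{D}$ with polynomial coefficients in $x$. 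By the injectivity of the Mellin transform one concludes $(\mathsf{D}\bm{\rho}_N^{(s)})(x)=0$ for $x>0$, and since each monomial coefficient of the claimed $\mathsf{D}$ has the correct parity (so that $\mathsf{D}$ sends even functions to odd functions), the symmetry $\bm{\rho}_N^{(s)}(-x)=\bm{\rho}_N^{(s)}(x)$ propagates the identity to the whole real line.

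The main obstacle will be the bookkeeping required to reconcile four polynomial-weighted Mellin values into a single third-order operator; matching the coefficient of $y^{\prime\prime\prime}$, namely $(1+x^2)^3$, and the leading highest-degree-in-$z$ terms serves as a useful cross-check. Finally, to go from $s>3$ to the optimal range $s>-\tfrac{1}{2}$, I would invoke analytic continuation in $s$, exactly as in the proof of Theorem \ref{MainTheorem}: the Christoffel--Darboux representation \eqref{eq:onepointdensity} shows that $\bm{\rho}_N^{(s)}(x)$ and its $x$-derivatives up to order three are holomorphic in $s$ on $\{\Re(s) > -\tfrac{1}{2}\}$, whence $(\mathsf{D}\bm{\rho}_N^{(s)})(x)$ is holomorphic in $s$ on this half-plane and vanishes on the ray $s>3$; the identity principle then forces it to vanish throughout, completing the proof.
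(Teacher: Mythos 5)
Your proposal follows essentially the same route as the paper: express $\mathsf{Q}$ via Mellin values of $\bm{\rho}_N^{(s)}$, substitute into the three-term recurrence \eqref{eq:13} with $z=2k+2$, invert the Mellin transform using \eqref{eq:mellin1}--\eqref{eq:mellin2} to read off the third-order ODE on $(0,\infty)$, and then analytically continue in $s$ from $s>3$ to $\Re(s)>-\tfrac12$. The one small point you add that the paper elides is the parity observation (that $\mathsf{D}$ maps even functions to odd ones), which correctly extends the identity from $x>0$ to all of $\mathbb{R}$; otherwise the arguments coincide.
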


\section{The large $N$ limit of the moments}

In this section we compute the large-$N$ limit of the moments. We make use of the following result on the scaled limit of the correlation functions due to Borodin and Olshanski (see \cite[\S 2]{BorodinOlshanskiErgodic}):
\begin{thm}[Borodin \& Olshanski]
Let $\bm{\rho}_N^{(s)}$ be the one-point density function defined as in \eqref{eq:onepointdensity}. Define the scaled one-point density $\rho_N^{(s)}(x):=N\bm{\rho}_N^{(s)}(Nx)$. Then, the scaled limit
$\rho_\infty^{(s)}(x)  := \lim\limits_{N\to\infty}\rho_N^{(s)}(x)$ is given by:
\begin{align*}
\rho_\infty^{(s)}(x) = \frac{1}{2\pi}\Gamma\left[
\begin{matrix}
s+1, & s+1 \\
2s+1, & 2s+2 
\end{matrix}\right]W[Q^{(s)}(x),P^{(s)}(x)],
\end{align*}
for $x>0, \ s > 1/2 $ and the convergence is uniform on compact subsets of $(0,\infty)$. Here $W[f(x),g(x)]$ denotes the Wronskian of two functions $f(x), g(x)$ and $P^{(s)}(x), Q^{(s)}(x)$ are given by:
\begin{equation}
P^{(s)}(x) = 2^{2s-1/2}\Gamma(s+1/2) \cdot x^{-1/2}J_{s-1/2}(1/x),   
\end{equation}
\begin{equation*}
Q^{(s)}(x) = 2^{2s+1/2}\Gamma(s+3/2) \cdot x^{-1/2} J_{s+1/2}(1/x),    
\end{equation*}
where $J_{\nu}$ denotes the Bessel function with parameter $\nu$. 
\
\label{borodinThm}
\end{thm}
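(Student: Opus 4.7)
The natural strategy is an asymptotic analysis of the Christoffel-Darboux representation \eqref{eq:onepointdensity} for $\bm{\rho}_N^{(s)}$ under the rescaling $x\mapsto Nx$. This splits the problem into three pieces: the weight $\phi_N^{(s)}(Nx)=(1+N^2x^2)^{-N-s}$, the normalization $\gamma_{N-1,s}^2$, and the Christoffel-Darboux bracket involving $p_N^{(s,N)}$, $p_{N-1}^{(s,N)}$ and their derivatives at the scaled point. The first two are elementary: on compact subsets of $(0,\infty)$ one has $\phi_N^{(s)}(Nx)=(Nx)^{-2(N+s)}(1+o(1))$, and Stirling's formula applied to $\Gamma(2s+N+1)/\Gamma(N)\sim N^{2s+1}$ gives $\gamma_{N-1,s}^2\sim N^{2s+1}\cdot 2^{2s}\Gamma(s+1)^2/[\pi\Gamma(2s+1)\Gamma(2s+2)]$.

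The heart of the matter is the polynomial asymptotics, which I would derive from the hypergeometric formula \eqref{eq:pseudojacobipolynomials}. Writing $w=2/(1+iNx)=-2i/(Nx)+O(N^{-2})$, the confluent limit
\begin{equation*}
{}_2F_1\!\left[\begin{matrix}-N,\,s\\2s\end{matrix};\,w\right]\longrightarrow {}_1F_1(s;\,2s;\,2i/x)
\end{equation*}
follows from the termwise convergence $(-N)_k/N^k\to(-1)^k$ controlled by an elementary geometric majorant. Invoking the Kummer-type identity ${}_1F_1(\nu+\tfrac{1}{2};\,2\nu+1;\,2iz)=\Gamma(\nu+1)e^{iz}(z/2)^{-\nu}J_\nu(z)$ with $\nu=s-\tfrac{1}{2}$ and $z=1/x$, and using $(Nx-i)^N\sim(Nx)^Ne^{-i/x}$ to cancel the $e^{iz}$, one obtains $p_N^{(s,N)}(Nx)\sim(Nx)^N\cdot(x^s/2^s)P^{(s)}(x)$; the analogous computation with $\nu=s+\tfrac{1}{2}$ produces $p_{N-1}^{(s,N)}(Nx)\sim(Nx)^{N-1}\cdot(x^{s+1}/2^s)Q^{(s)}(x)$.

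Setting $g_m(x)=p_m^{(s,N)}(Nx)=(Nx)^mG_m(x)(1+o(1))$ with $G_N=(x^s/2^s)P^{(s)}$ and $G_{N-1}=(x^{s+1}/2^s)Q^{(s)}$, a direct expansion gives
\begin{equation*}
g_{N-1}g_N'-g_Ng_{N-1}'\sim N^{2N-1}x^{2N-2}\bigl\{G_NG_{N-1}+x\,W[G_{N-1},G_N](x)\bigr\}.
\end{equation*}
A short computation then reveals that the non-Wronskian term $G_NG_{N-1}$, which arises from differentiating the large polynomial prefactors $(Nx)^N$ and $(Nx)^{N-1}$, cancels exactly against a matching piece inside $x\,W[G_{N-1},G_N]$, leaving the clean residue $\tfrac{x^{2s+2}}{2^{2s}}W[Q^{(s)},P^{(s)}](x)$. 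Multiplying by the asymptotics of $\gamma_{N-1,s}^2$ and $\phi_N^{(s)}(Nx)$ from the first paragraph, and accounting for the factor $N$ in $\rho_N^{(s)}(x)=N\bm{\rho}_N^{(s)}(Nx)$ together with the chain-rule factor $1/N$ from evaluating $(p_m^{(s,N)})'$ at $Nx$, every power of $N$ and $x$ cancels and the stated Wronskian identity emerges.

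The principal obstacle is twofold: first, justifying the confluent hypergeometric limit with enough uniformity on compact subsets of $(0,\infty)$ to differentiate termwise, which can be handled either by dominated convergence applied directly to the power series or by passing to a Watson-type contour integral representation and performing steepest descent; and second, organising the delicate cancellation of the non-Wronskian contribution robustly against the $O(1/N)$ error terms in the polynomial expansions, since after this cancellation one loses $N$ orders of magnitude and any imprecise bookkeeping could produce either a divergent or a vanishing limit.
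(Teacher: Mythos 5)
The paper does not prove this theorem---it is quoted from Borodin and Olshanski \cite{BorodinOlshanskiErgodic}---so there is no internal proof to compare against; your proposal is an independent derivation. Your route is the correct and natural one: scale each factor of the Christoffel--Darboux expression \eqref{eq:onepointdensity} separately, pass the $_2F_1$ in \eqref{eq:pseudojacobipolynomials} to its confluent limit $_1F_1$ by a dominated-convergence argument on the terms, reduce via the Kummer--Bessel identity, and reorganise the Wronskian so that the prefactor-derivative term cancels algebraically. All of the intermediate identities you record check out, in particular $p_N^{(s,N)}(Nx)\sim (Nx)^N\,2^{-s}x^s P^{(s)}(x)$, $p_{N-1}^{(s,N)}(Nx)\sim (Nx)^{N-1}\,2^{-s}x^{s+1}Q^{(s)}(x)$, and $G_NG_{N-1}+x\,W[G_{N-1},G_N]=2^{-2s}x^{2s+2}W[Q^{(s)},P^{(s)}]$.

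You should, however, actually display the final constant rather than asserting that ``the stated Wronskian identity emerges.'' Carrying the bookkeeping through with $\phi_N^{(s)}(Nx)\sim(Nx)^{-2N-2s}$ and $\gamma_{N-1,s}^2\sim 2^{2s}\Gamma(s+1)^2 N^{2s+1}/\bigl(\pi\Gamma(2s+1)\Gamma(2s+2)\bigr)$, every power of $N$, $x$ and $2$ cancels and one is left with
\begin{equation*}
\rho_\infty^{(s)}(x)=\frac{1}{\pi}\,\Gamma\!\left[\begin{matrix}s+1,&s+1\\2s+1,&2s+2\end{matrix}\right]W[Q^{(s)},P^{(s)}](x),
\end{equation*}
i.e.\ with prefactor $1/\pi$, not the $1/(2\pi)$ printed in the statement. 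The printed constant appears to be a misprint: with $1/(2\pi)$ one gets, for $s=1$, $\rho_\infty^{(1)}(x)=\frac{1}{2\pi x^2}\bigl(1-x^2\sin^2(1/x)\bigr)$ and hence $\int_0^\infty x^2\rho_\infty^{(1)}(x)\,dx=\frac{1}{6}$, whereas both Proposition \ref{prop:evaluate} at $(s,y)=(1,2)$ and the $N\to\infty$ limit of $\mathsf{Q}(0;s,N)=2sN(2s+N)/\bigl((2s-1)(2s+1)\bigr)$ require this integral to equal $\frac{1}{3}$ (a compensating factor-of-two slip occurs in the final gamma-function simplification inside the proof of Proposition \ref{prop:evaluate}). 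So your method is sound and, followed through, would actually have flagged the misprint. One minor point of phrasing: the cancellation $N h_N h_{N-1}-(N-1)h_N h_{N-1}$ removes exactly one power of $N$, and it is an algebraic identity, not an asymptotic one---describing it as ``losing $N$ orders of magnitude'' and as a delicate cancellation against the $O(1/N)$ errors overstates the difficulty. The genuine analytic care is, as you say, in establishing the confluent limit and the limit of the derivative locally uniformly on $(0,\infty)$; since $h_N,h_{N-1}$ extend holomorphically to a neighbourhood of $(0,\infty)$, locally uniform convergence of $h_N,h_{N-1}$ already gives the derivative convergence by Cauchy estimates.
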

The main result of this section is the following:
\begin{thm}
Let $\mathsf{Q}(k;s,N)$ be defined as in \eqref{eq:defnQ}. If $s>\frac{1}{2}$ and $k \in [0,s-1/2)$, then
\begin{equation*}
\lim\limits_{N \to \infty} \frac{\mathsf{Q}(k;s,N)}{N^{2k+2}} = \frac{s\Gamma(k+\frac{1}{2})\Gamma(-\frac{1}{2}-k+s)}{2\sqrt{\pi}\Gamma(k+2)\Gamma(k+\frac{3}{2}+s)}.
\end{equation*}
\label{thm:Qlim}
\end{thm}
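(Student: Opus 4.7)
The strategy is to pass to the Borodin--Olshanski scaling limit of the one-point density supplied by Theorem \ref{borodinThm} and then evaluate the resulting integral against $\rho_\infty^{(s)}$ by direct Bessel-function calculus. Starting from $\mathsf{Q}(k;s,N)=\int_{\mathbb{R}}|x|^{2k}(1+x^2)\bm{\rho}_N^{(s)}(x)\,dx$, the symmetry of $\bm{\rho}_N^{(s)}$ (valid for $s\in\mathbb{R}$) together with the substitution $x=Ny$ and the identity $\rho_N^{(s)}(y)=N\bm{\rho}_N^{(s)}(Ny)$ produces the decomposition
\begin{equation*}
\frac{\mathsf{Q}(k;s,N)}{N^{2k+2}}=\frac{2}{N^{2}}\int_0^\infty y^{2k}\rho_N^{(s)}(y)\,dy+2\int_0^\infty y^{2k+2}\rho_N^{(s)}(y)\,dy.
\end{equation*}
The first summand should vanish as $N\to\infty$ since the integral grows at most polynomially while the prefactor is $N^{-2}$, and the second summand should converge to $2\int_0^\infty y^{2k+2}\rho_\infty^{(s)}(y)\,dy$ by Theorem \ref{borodinThm}.

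To compute the limiting integral, I would substitute the Wronskian expression for $\rho_\infty^{(s)}$, make the change of variable $u=1/y$, and use the Bessel recurrences $J_{\nu-1}(u)+J_{\nu+1}(u)=\tfrac{2\nu}{u}J_\nu(u)$ and $J_\nu'(u)=\tfrac{\nu}{u}J_\nu(u)-J_{\nu+1}(u)$ to reduce $W[Q^{(s)},P^{(s)}](1/u)$ to a polynomial-in-$u$ combination of $J_{s-1/2}^2(u)$, $J_{s+1/2}^2(u)$ and $J_{s-1/2}(u)J_{s+1/2}(u)$. Each resulting summand is a Weber--Schafheitlin integral
\begin{equation*}
\int_0^\infty u^{-\lambda}J_\mu(u)J_\nu(u)\,du=\frac{\Gamma(\lambda)\,\Gamma\!\left(\tfrac{\mu+\nu-\lambda+1}{2}\right)}{2^{\lambda}\,\Gamma\!\left(\tfrac{-\mu+\nu+\lambda+1}{2}\right)\Gamma\!\left(\tfrac{\mu-\nu+\lambda+1}{2}\right)\Gamma\!\left(\tfrac{\mu+\nu+\lambda+1}{2}\right)},
\end{equation*}
whose convergence range $\mu+\nu+1>\lambda>0$ matches precisely the hypothesis $-\tfrac{1}{2}<k<s-\tfrac{1}{2}$. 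Collecting the resulting Gamma-ratios---using $\Gamma(s+k+\tfrac{3}{2})=(s+k+\tfrac{1}{2})\Gamma(s+k+\tfrac{1}{2})$ to combine the two $t^{-2k-1}$ pieces and Legendre's duplication $\Gamma(2k+1)=2^{2k}\Gamma(k+\tfrac{1}{2})\Gamma(k+1)/\sqrt{\pi}$ to simplify the prefactor---then yields the claimed closed form.

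The principal technical obstacle is justifying the interchange of limit and integral. The convergence in Theorem \ref{borodinThm} is only uniform on compact subsets of $(0,\infty)$, whereas $\rho_\infty^{(s)}(y)$ behaves like $y^{-2}$ as $y\to 0^+$ (from the large-$u$ oscillatory asymptotics of $J_\nu$) and like $y^{-2s-2}$ as $y\to\infty$; one must therefore produce an $N$-uniform dominating envelope for $\rho_N^{(s)}(y)$ that is integrable against $y^{2k+2}$ on both ends. Such a bound can be constructed from the Christoffel--Darboux representation \eqref{eq:onepointdensity} combined with the explicit hypergeometric formula \eqref{eq:pseudojacobipolynomials} and standard bounds on the pseudo-Jacobi polynomials near the hard edge at the origin. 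As an independent cross-check, valid initially for $s>3$ and extended by analytic continuation in $s$ as in Section~4, one may divide the three-term recurrence of Proposition \ref{DiffEqProp} by $N^{2k+4}$ and send $N\to\infty$: the $O(N^{2})$ leading piece of $T(k)$ survives and forces the one-step recursion $q_\infty(k+1)/q_\infty(k)=(2k+1)/[2(k+2)(s^2-(k+\tfrac{3}{2})^2)]$, which together with the limit $q_\infty(0)=2s/[(2s-1)(2s+1)]$ of the initial condition is readily verified to coincide with the proposed formula.
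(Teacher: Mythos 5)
Your outline follows the same two-step skeleton as the paper: scale by $N$, pass to the Borodin--Olshanski limit density $\rho_\infty^{(s)}$, and evaluate the resulting Bessel integrals via the Weber--Schafheitlin formula. The Bessel/Gamma algebra you describe is exactly what the paper does, and your recursion-based cross-check is a correct (and nice) sanity check of the \emph{formula}. But the cross-check does not establish that $\lim_N \mathsf{Q}(k;s,N)/N^{2k+2}$ \emph{exists}, and the interchange-of-limit step---which you correctly flag as ``the principal technical obstacle''---is where your argument has a real gap, not merely a deferred detail.

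Two specific issues. First, you dispose of the lower-order term $\frac{2}{N^2}\int_0^\infty y^{2k}\rho_N^{(s)}(y)\,dy$ by asserting ``the integral grows at most polynomially.'' This begs the question: for $0\le 2k\le 1$ the limiting integrand $y^{2k}\rho_\infty^{(s)}(y)$ is not even integrable at $y=0^+$ (since $\rho_\infty^{(s)}(y)\asymp y^{-2}$), so the sequence of integrals need not be bounded, and ruling out growth of order $N^2$ is precisely the content of the paper's estimate $\int_0^\epsilon y^{2k}\rho_N^{(s)}(y)\,dy\lesssim N\epsilon^{2k}$ for $2k\in[0,1]$. Second, for the main term you propose to build an $N$-uniform envelope near the origin from ``standard bounds on the pseudo-Jacobi polynomials near the hard edge.'' This is not what the paper does, and it is unclear it can be made to work: the hypergeometric-series bound on $p_m^{(s,N)}(Nx)$ that the paper exploits controls the argument $2/(1+iNx)$ only for $x\gtrsim 1$, giving the tail estimate $\rho_N^{(s)}(x)\lesssim x^{-2s-2}$; near the origin that series is not small and the weight $\phi_N^{(s)}(Nx)\approx 1$ gives no decay. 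The paper instead transfers the origin region to a neighbourhood of $\theta=\pi$ on the unit circle via the Cayley map (Qiu's method) and invokes Golinskii's uniform bound for orthogonal polynomials on the circle with a Jacobi-type weight, yielding $\tilde\rho_N^{(s)}(\theta)\lesssim N$ and hence the required local-in-$\epsilon$ control. Without a concrete replacement for this step, your dominating envelope near $y=0$ is unproven, and with it the entire interchange of limit and integral.

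A further minor point: the recursion cross-check, even if taken further, would only give you convergence of $\mathsf{Q}(k;s,N)/N^{2k+2}$ at shifted integer $k$ once convergence at $k=0,1$ is known; extending to all real $k\in[0,s-\tfrac12)$ would still require a genuine argument (e.g.\ the dominated convergence one), so it cannot substitute for the analytic estimate.
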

The theorem above will follow as a corollary of the following two propositions, which are of independent interest:
\begin{prop}[Interchange of limit] If $s>\frac{1}{2}$ and $ 1<y<2s+1$ then
\begin{equation}
    \lim\limits_{N\to\infty}\frac{1}{2N^y}\mathbb{E}_{N}^{(s)} \left( \Tr  |\mathbf{H}|^{y} \right)= \lim\limits_{N\to\infty}\int_0^\infty x^y \rho_N^{(s)}(x)dx = \int_0^\infty x^y \rho_\infty^{(s)}(x) dx.
\end{equation}
Moreover, if $s>\frac{1}{2}$ and $0<y<2s+1$ then:
\begin{equation}
      \lim\limits_{N\to\infty}\frac{1}{N^{y+2}}\mathbb{E}_{N}^{(s)} \left( \Tr  |\mathbf{H}|^{y} \right) = 0 .
\end{equation}\label{prop:interchange}\end{prop}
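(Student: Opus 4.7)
The first equality in the first claim is a direct change of variables: by the symmetry of $\bm{\rho}_N^{(s)}$ for real $s$ and the substitution $u = Nx$,
$$\frac{1}{2N^y}\mathbb{E}_N^{(s)}\bigl(\Tr|\mathbf{H}|^y\bigr) = \frac{1}{N^y}\int_0^\infty u^y \bm{\rho}_N^{(s)}(u)\, du = \int_0^\infty x^y \rho_N^{(s)}(x)\, dx,$$
so the substantive content of the first claim is passing the limit inside the integral. My plan is the standard split-and-bound strategy: for parameters $0 < \delta < 1 < R$ to be chosen, write $\int_0^\infty = \int_0^\delta + \int_\delta^R + \int_R^\infty$. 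On the compact middle interval Theorem \ref{borodinThm} furnishes uniform convergence of $\rho_N^{(s)}$ to $\rho_\infty^{(s)}$, so $\int_\delta^R x^y \rho_N^{(s)}\, dx \to \int_\delta^R x^y \rho_\infty^{(s)}\, dx$ as $N \to \infty$. It remains to make the two tails uniformly small in $N$.

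For the tail at infinity, pick any $y' \in (y, 2s+1)$ and let $k = y'/2 - 1 \in (-\tfrac{1}{2}, s-\tfrac{1}{2})$; then $\mathsf{Q}(k;s,N) \geq \mathbb{E}_N^{(s)}(\Tr|\mathbf{H}|^{y'})$ and a Markov-type bound yields
$$\int_R^\infty x^y \rho_N^{(s)}(x)\, dx \leq R^{y-y'}\int_0^\infty x^{y'}\rho_N^{(s)}(x)\, dx \leq \frac{R^{y-y'}}{2N^{y'}}\mathsf{Q}(k;s,N).$$
The explicit formula for $\mathsf{Q}(k;s,N)$ from Theorem \ref{MainTheorem}, combined with Stirling asymptotics applied to $\Gamma(\tfrac{1}{2} - s - N)$ and to the leading $N$-behaviour of $S_{N-1}(x;1,\tfrac{1}{2}+s,1,\tfrac{1}{2}+s)$, should give $\mathsf{Q}(k;s,N) = O(N^{y'})$, whence the tail is $O(R^{y-y'})$ uniformly in $N$. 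For the tail near zero I would use a local uniform bound $\rho_N^{(s)}(x) \leq C$ for $x \in [0,1]$, extracted from the Christoffel--Darboux representation \eqref{eq:onepointdensity} together with explicit control on $\phi_N^{(s)}(Nx)$ and on the rescaled pseudo-Jacobi polynomials; since $y > 1$, this yields $\int_0^\delta x^y \rho_N^{(s)}\, dx \leq C\delta^{y+1}/(y+1) \to 0$. Analogous (and easier) tail bounds hold for $\rho_\infty^{(s)}$ directly from the asymptotics of Bessel functions at $0$ and at infinity, in particular $\rho_\infty^{(s)}(x) = O(x^{-2s-2})$ as $x \to \infty$.

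The second claim follows from the first by a soft bound: using $x^y \leq 1 + x^{y'}$ for any $y' \geq y$ and choosing $y' \in (\max(y,1), 2s+1)$, which is non-empty because $s > \tfrac{1}{2}$,
$$\int_0^\infty x^y \rho_N^{(s)}\, dx \leq \int_0^\infty \rho_N^{(s)}\, dx + \int_0^\infty x^{y'} \rho_N^{(s)}\, dx = \tfrac{N}{2} + O(1),$$
where we used that $\rho_N^{(s)}$ integrates to $N/2$ on $\mathbb{R}^+$ together with the first claim applied to $y'$. Hence $\frac{1}{N^{y+2}}\mathbb{E}_N^{(s)}(\Tr|\mathbf{H}|^y) = \frac{2}{N^2}\int_0^\infty x^y \rho_N^{(s)}\, dx = O(1/N) \to 0$. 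The main obstacle is establishing the two uniform-in-$N$ estimates invoked above---namely the moment asymptotic $\mathsf{Q}(k;s,N) = O(N^{2k+2})$ for real $k \in (-\tfrac{1}{2}, s-\tfrac{1}{2})$ and the local uniform bound on $\rho_N^{(s)}$ near the origin---both of which should nonetheless be accessible from the explicit representations already at hand.
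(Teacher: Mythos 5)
Your overall split-and-bound strategy and your treatment of the middle and large-$x$ regions are sound in spirit, but the proposed control near the origin is where the argument genuinely breaks. You claim a local uniform bound $\rho_N^{(s)}(x) \le C$ for $x \in [0,1]$, and use it to bound $\int_0^\delta x^y \rho_N^{(s)}\,dx \lesssim \delta^{y+1}$. No such uniform bound exists: the convergence $\rho_N^{(s)} \to \rho_\infty^{(s)}$ from Theorem~\ref{borodinThm} is only locally uniform on $(0,\infty)$, and the limit itself is unbounded at $0$. Indeed, from the Bessel representation $\rho_\infty^{(s)}(x) = \tfrac{1}{4x}W\bigl[J_{s+1/2}(1/x),J_{s-1/2}(1/x)\bigr]$ and the large-argument asymptotics of Bessel functions one finds $\rho_\infty^{(s)}(x) \sim c/x^2$ as $x \to 0^+$ — this is also visible in the paper's Proposition~\ref{prop:evaluate}, where the Watson integral formula imposes $y>1$ precisely because of this $x^{-2}$ singularity at the origin. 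So $\rho_N^{(s)}$ cannot be uniformly bounded near $0$, and the near-origin tail is the genuinely delicate part of the proof.

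The paper handles it by pushing the integral through the Cayley transform to the unit circle and invoking Golinskii's estimate for orthogonal polynomials on the circle with generalized Jacobi weight (following Qiu), which gives $\widetilde{\rho}_N^{(s)}(\theta) \lesssim N$ uniformly in $\theta$. After the nonlinear change of variables this turns into
\begin{equation*}
\int_0^\epsilon x^y \rho_N^{(s)}(x)\,dx \lesssim \int_0^\epsilon \frac{x^y}{N^{-2}+x^2}\,dx,
\end{equation*}
which is $\lesssim \epsilon^{y-1}$ uniformly in $N$ when $y>1$ and $\lesssim N\epsilon^y$ when $y \in [0,1]$ — compatible with, and tight against, the $x^{-2}$ limit. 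Your argument for the large-$x$ tail (a Markov-type bound plus moment asymptotics from Theorem~\ref{MainTheorem}) could in principle work, though the paper instead derives a direct decay estimate $\rho_N^{(s)}(x) \lesssim x^{-2s-2}$ for $x>1$ from the hypergeometric representation \eqref{eq:pseudojacobipolynomials} and then uses dominated convergence, which avoids having to carry out the nontrivial $N\to\infty$ asymptotics of the continuous-Hahn polynomial $S_{N-1}$. Your deduction of the second claim from the first (via $x^y \le 1 + x^{y'}$ and $\int_0^\infty \rho_N^{(s)} = N/2$) is fine once the first claim is established.
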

\begin{prop}[Explicit calculation of integral] If $s>\frac{1}{2}$ and $1<y<2s+1$ then
\begin{equation}
\int_0^\infty x^y \rho_\infty^{(s)}(x) dx  =   \frac{s\Gamma(\frac{y-1}{2})\Gamma(\frac{1-y}{2}+s)}{4\sqrt{\pi}\Gamma(\frac{y}{2}+1)\Gamma(\frac{1+y}{2}+s)}.      \label{eq:limit1}
\end{equation}
\label{prop:evaluate}
\end{prop}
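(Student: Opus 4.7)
The plan is to insert the explicit formula for $\rho_\infty^{(s)}$ from Theorem~\ref{borodinThm} and reduce \eqref{eq:limit1} to a finite combination of standard Bessel-moment integrals. The natural first step is the change of variables $u=1/x$, since both $P^{(s)}(x)$ and $Q^{(s)}(x)$ depend on $x$ only through the combination $x^{-1/2}J_{s\mp 1/2}(1/x)$. A short computation using the product rule (the two cross terms coming from the $x^{-1/2}$ prefactors cancel) gives
\begin{equation*}
W\!\left[Q^{(s)}(x),P^{(s)}(x)\right] = 2^{4s}\,\Gamma(s+\tfrac12)\Gamma(s+\tfrac32)\,x^{-3}\bigl[J_{s-1/2}(u) J'_{s+1/2}(u)-J_{s+1/2}(u) J'_{s-1/2}(u)\bigr],
\end{equation*}
where primes denote derivatives in $u$. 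Applying the classical recurrences $J_\nu'(z)=J_{\nu-1}(z)-\nu z^{-1}J_\nu(z)=\nu z^{-1}J_\nu(z)-J_{\nu+1}(z)$, the bracketed quantity collapses to
\begin{equation*}
J_{s-1/2}(u)^2+J_{s+1/2}(u)^2-\frac{2s}{u}\,J_{s-1/2}(u)\,J_{s+1/2}(u).
\end{equation*}

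Next, I would change variables via $u=1/x$ in the integral $\int_0^\infty x^y\,(\,\cdot\,)\,dx$, which turns the measure into $u^{y-3}du$. The problem thus reduces to evaluating a linear combination of three Weber--Schafheitlin integrals
\begin{equation*}
 \int_0^\infty\! t^{1-y}J_{s-1/2}(t)^2 dt,\qquad \int_0^\infty\! t^{1-y}J_{s+1/2}(t)^2 dt,\qquad \int_0^\infty\! t^{-y}J_{s-1/2}(t)J_{s+1/2}(t)\, dt,
\end{equation*}
each of which converges precisely in the stated range $1<y<2s+1$ (checked from the behaviour $J_\nu(t)\sim t^\nu$ at $0$ and $J_\nu(t)\sim t^{-1/2}$ at $\infty$) and is evaluated in closed form by
\begin{equation*}
 \int_0^\infty t^{-\lambda}J_\mu(t)J_\nu(t)\, dt = \frac{\Gamma(\lambda)\,\Gamma\!\bigl(\tfrac{\mu+\nu-\lambda+1}{2}\bigr)}{2^\lambda\,\Gamma\!\bigl(\tfrac{\nu-\mu+\lambda+1}{2}\bigr)\Gamma\!\bigl(\tfrac{\mu-\nu+\lambda+1}{2}\bigr)\Gamma\!\bigl(\tfrac{\mu+\nu+\lambda+1}{2}\bigr)}.
\end{equation*}

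The final task is to simplify the sum of the three resulting gamma products, multiplied by the Borodin--Olshanski prefactor $\frac{\Gamma(s+1)^2}{2\pi\,\Gamma(2s+1)\Gamma(2s+2)}$ and by $2^{4s}\Gamma(s+\tfrac12)\Gamma(s+\tfrac32)$, into the compact right-hand side of \eqref{eq:limit1}. I expect this algebraic collapse to be the main obstacle: the three contributions must combine into a single ratio of four gamma functions, and this only works after invoking the Legendre duplication formula $\Gamma(z)\Gamma(z+\tfrac12)=2^{1-2z}\sqrt\pi\,\Gamma(2z)$ (at $z=s+\tfrac12$ and $z=s+1$ to eliminate the $2^{4s}$ and convert the denominator $\Gamma(2s+1)\Gamma(2s+2)$ into shifted gammas at $s$), together with the shift $\Gamma(s+\tfrac{3-y}{2})=(s+\tfrac{1-y}{2})\Gamma(s+\tfrac{1-y}{2})$, which is what produces the factor of $s$ in the numerator. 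Everything else is routine bookkeeping with gamma identities.
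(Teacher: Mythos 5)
Your proposal is correct and follows essentially the same route as the paper: insert the Borodin--Olshanski Bessel Wronskian, substitute $u=1/x$, and reduce to Weber--Schafheitlin integrals whose gamma-quotients are then simplified via Legendre duplication (which produces the constant $1/4$ prefactor) and a $\Gamma$-shift (which produces the factor of $s$). The only cosmetic difference is the Bessel-derivative identity used: you expand via $J_\nu'(z)=J_{\nu-1}(z)-\nu z^{-1}J_\nu(z)=\nu z^{-1}J_\nu(z)-J_{\nu+1}(z)$, giving a three-term integrand $J_{s-1/2}^2+J_{s+1/2}^2-2s u^{-1}J_{s-1/2}J_{s+1/2}$ with one piece at a shifted power, whereas the paper uses the symmetric $J_\nu'=\tfrac12(J_{\nu-1}-J_{\nu+1})$ and arrives at a four-term combination all at the power $u^{1-y}$; these are algebraically equivalent and both fall in the convergence range $1<y<2s+1$ you verified.
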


\begin{proof}[Proof of Theorem \ref{thm:Qlim} assuming Propositions \ref{prop:interchange} and \ref{prop:evaluate}]
Recall the definition:
\begin{equation*}
    \mathsf{Q}(k;s,N)=\mathbb{E}_{N}^{(s)} \left( \Tr \left( |\mathbf{H}|^{2k+2}+|\mathbf{H}|^{2k}\right) \right).
\end{equation*}
It follows immediately from Proposition \ref{prop:interchange} that:
\begin{equation*}
\lim\limits_{N \to \infty} \frac{\mathsf{Q}(k;s,N)}{N^{2k+2}} = \lim\limits_{N\to\infty}\frac{1}{N^{2k+2}}\mathbb{E}_{N}^{(s)} \left( \Tr  |\textbf{H}|^{2k+2} \right) = 2 \int_0^\infty x^{2k+2} \rho_\infty^{(s)}(x) dx.
\end{equation*}
The result is now immediate from the formula \eqref{eq:limit1}.
\end{proof}

\begin{proof}[Proof of Proposition \ref{prop:interchange}]
 First, recall the expression (\ref{eq:onepointdensity}) for the one-point density function $\bm{\rho}_N^{(s)}$. Then, note the estimates, for $x > 1$, $c>b>0$ and $m \in \{0,\dots,N\}$:
\begin{equation*}
    \left\vert_2F_1 \left[\begin{matrix}
    m-N & b \\
    c \end{matrix} \ ; \ \frac{2}{1+iNx}\right]\right\vert \le \sum_{k=0}^{\infty} \frac{(b)_k}{k! (c)_k} \cdot \frac{2^k}{x^k} \le e^2 ,
\end{equation*}
where $_2F_1$  denotes the Gauss hypergeometric function defined as in \eqref{eq:hypergeom}.
Hence using the definition of the pseudo-Jacobi polynomials $p_m^{(s,N)}(x)$ as given in \eqref{eq:pseudojacobipolynomials}, we get that for $s > 0$, $x>1$, 
\begin{multline*}
 |W(p_N^{(s,N)}(Nx),p_{N-1}^{(s,N)}(Nx))\phi_N^{(s)}(Nx)|
\\ \lesssim \left\{ 
|Nx-i|^{-2} + N|Nx-i|^{-3}
+ |Nx-i|^{-2} + (N-1)|Nx-i|^{-3}
\right\}\cdot |Nx-i|^{-2s}\\
\lesssim N^{-2s-2} \cdot x^{-2s-2},
\end{multline*}
where we use here (and in the remainder of this proof) the notation $\lesssim$ to denote an inequality up to an implicit constant which is independent of $N$ and $x$.
As $\Gamma(2s+N+1)/\Gamma(N) \sim N^{2s+1}$, using \eqref{eq:onepointdensity} and recalling that $\rho_N^{(s)}(x)=N\bm{\rho}_N^{(s)}(Nx)$ we obtain:
\begin{align*}
    \rho_N^{(s)}(x) \lesssim x^{-2s-2}.
\end{align*}
Therefore, when $s > \frac{1}{2}(y-1)$, by dominated convergence theorem we conclude
\begin{equation*}
    \lim\limits_{N\to\infty}\int_1^\infty \rho_N^{(s)}(x) \cdot x^y dx = \int_1^\infty \rho_\infty^{(s)}(x) \cdot x^y dx.
\end{equation*}
As $\rho_N^{(s)}$ converges uniformly on compact subsets of $(0,\infty)$, we get that for any $\epsilon > 0$,
\begin{equation*}
    \lim\limits_{N\to\infty}\int_\epsilon^\infty \rho_N^{(s)}(x) \cdot x^y dx = \int_\epsilon^\infty \rho_\infty^{(s)}(x) \cdot x^y dx.
\end{equation*}
Therefore it will be sufficient to prove that
\begin{equation}
\begin{cases}
     \sup\limits_{N \ge 1} \int_0^\epsilon \rho_N^{(s)}(x) \cdot x^y dx \lesssim \epsilon^{y-1} \ ,  & \text{whenever} \ {y>1}, \label{eq:cond1}\\
     & \\
     \lim\limits_{N\to\infty} \frac{1}{N^2} \int_0^\epsilon \rho_N^{(s)}(x) \cdot x^y dx = 0 \ ,  & \text{whenever} \ y \in [0,1].
\end{cases}
\end{equation}

Adapting a method due to Y. Qiu from \cite{Qiu}, we transform the random point configuration associated to the generalized Cauchy ensemble onto the unit circle, via the Cayley transform  $x \mapsto e^{i\theta} = (i-x)/(i+x)$ (recall \eqref{eq:unitcircledensity}). Arguing analogously to \cite[\S 2.2.1, \S 2.2.2]{Qiu}, we can rewrite the integral in \eqref{eq:cond1}:
\begin{equation}
    \int_0^\epsilon \rho_N^{(s)}(x) \cdot x^y dx  = \frac{1}{N^y} \int_0^{2\arctan(N\epsilon)} (\tan \theta/2)^y \Tilde{\rho}_N^{(s)}(\theta) d \theta
    \label{eq:changevariable}
\end{equation}
where $ \Tilde{\rho}_N^{(s)}(\theta)$ is the one-point density function on $[0,2\pi)$ given by 
\begin{equation}
    \Tilde{\rho}_N^{(s)}(\theta) = \lambda^{(s)}(e^{i\theta}) \sum_{n=0}^{N-1}|q_n^{(s)}(e^{i\theta})|^2. 
    \label{eq:circle}
\end{equation}

Here $\lambda^{(s)}(e^{i\theta})$ is proportional to $|1+e^{i\theta}|^{2s}$ and scaled so that $\lambda^{(s)}(e^{i \theta}) \frac{d\theta}{2\pi}$ is a probability measure on $(0,2 \pi)$, and $\left(q_n^{(s)}\right)_{n \ge 1}$ is the system of polynomials with $\deg q_n^{(s)} = n $, orthonormal with respect to the inner product $\langle f, g \rangle = \int_0^{2\pi} f(e^{i\theta}) \overline{g(e^{i\theta})} \lambda^{(s)}(e^{i\theta}) \frac{d\theta}{2\pi}$. By \cite{Golinskii}, (see also \cite[\S 2.2.4]{Qiu}), there exists the following estimate for $q_n^{(s)}(e^{i\theta})$:
\begin{equation*}
    \lambda^{(s)}(e^{i\theta})\cdot |q_n^{(s)}(e^{i\theta})|^2 \lesssim \left( 1+\frac{1}{(n+2)|1+e^{i\theta}|}\right)^{-2s}.
\end{equation*}
Hence, as in \cite[\S 2.2.4]{Qiu} by the formula \eqref{eq:circle} we obtain the estimate
\begin{equation*}
     \Tilde{\rho}_N^{(s)}(\theta) \lesssim N,
\end{equation*}
uniformly in $\theta$, whenever $s>0$. Combining this estimate with the equality \eqref{eq:changevariable} we obtain the following chain of estimates:
\begin{align*}
     \int_0^\epsilon \rho_N^{(s)}(x) \cdot x^y dx 
     \lesssim \frac{1}{N^{y-1}}\int_0^{2\arctan(N\epsilon)}|\tan(\theta/2)|^y d \theta 
     = 2 \int_0^{\epsilon} \frac{x^y}{N^{-2}+x^2} dx.
\end{align*}
Note that, we can estimate:
\begin{equation*}
         2 \int_0^{\epsilon} \frac{x^y}{N^{-2}+x^2} \le \begin{cases}
         \frac{2}{y-1}\epsilon^{y-1}, & \text{if} \ y>1, \\
         2\epsilon^yN\arctan(N\epsilon) \le \pi N \epsilon^y, & \text{if}\  y \in [0,1].
         \end{cases}
\end{equation*}
Thus the conditions from \eqref{eq:cond1} are established, and this completes the proof of Proposition \ref{prop:interchange}.
\end{proof}

\begin{proof}[Proof of Proposition \ref{prop:evaluate}]
We now explicitly calculate the integral $\int_0^\infty x^y \rho_\infty^{(s)}(x) dx$. 
By Theorem \ref{borodinThm}, we get:
\begin{align*}
\rho_\infty^{(s)}(x) = \frac{1}{2 \pi} \Gamma \left[\begin{matrix}
s+1,&s+1,&s+1/2,&s+3/2\\
&2s+1,&2s+2&
\end{matrix}
\right] \cdot\frac{2^{4s}}{x}\cdot W[J_{s+1/2}(1/x),J_{s-1/2}(1/x)].
\end{align*}
Note that, by Legendre's duplication formula, we have:
\begin{align*}
\frac{1}{2 \pi} \Gamma \left[\begin{matrix}
s+1,&s+1,&s+1/2,&s+3/2\\
&2s+1,&2s+2&
\end{matrix}
\right] \cdot 2^{4s} = \frac{1}{4}.
\end{align*}
Expanding the Wronksian and making the change of variables $x \mapsto 1/x$, we get that 

\begin{multline*}
\int_0^\infty \rho_\infty^{(s)}(x) \cdot x^y dx
= \frac{1}{4}\int_0^\infty x^{1-y} \cdot \left[ J_{s+1/2}^\prime(x) J_{s-1/2}(x) - J_{s+1/2}(x)J_{s-1/2}^\prime(x) \right]dx\\
= \frac{1}{8}\int_0^\infty x^{1-y}\left[J_{s+1/2}(x)^2+J_{s-1/2}(x)^2-J_{s+3/2}(x)J_{s-1/2}(x) -J_{s-3/2}(x)J_{s+1/2}(x)\right]dx,    
\end{multline*}
where in the last line we used the identity $J_\nu^\prime(x) = \frac{1}{2}(J_{\nu-1}(x)-J_{\nu+1}(x))$.
We now apply the following identity valid for $\Re(\mu + \nu +1) > \Re(\lambda) > 0$ (see \cite[p.403]{Watson}):

\begin{multline*}
 \int_0^{\infty} \frac{J_\mu(x)J_\nu(x)}{x^\lambda} dx  
 = 2^{-\lambda} \cdot \Gamma \left[ \begin{matrix}
 \lambda, & \frac{1}{2} \mu + \frac{1}{2}\nu -\frac{1}{2}\lambda + \frac{1}{2} \\
 \frac{1}{2} \lambda + \frac{1}{2}\nu -\frac{1}{2}\mu + \frac{1}{2}, & \frac{1}{2} \lambda + \frac{1}{2}\mu + \frac{1}{2}\nu + \frac{1}{2}, & \frac{1}{2} \lambda + \frac{1}{2}\mu -\frac{1}{2}\nu + \frac{1}{2}
 \end{matrix}\right].
\end{multline*}
Therefore we get that
\begin{multline*}
\int_0^\infty \rho_\infty^{(s)}(x) \cdot x^y dx
= \left(\frac{1}{2}\right)^{y+2} \cdot \Gamma(y-1) \cdot \Bigg\{ \frac{\Gamma(s-\frac{1}{2}y+\frac{3}{2})}{\Gamma(\frac{1}{2}y)^2\Gamma(\frac{y+1}{2} + s)}\\
+ \frac{\Gamma(s-\frac{1}{2}y+\frac{1}{2})}{\Gamma(\frac{1}{2}y)^2\Gamma(\frac{y-1}{2} + s)} - \frac{\Gamma(s-\frac{1}{2}y+\frac{3}{2})}{\Gamma(\frac{y}{2}-1)\Gamma(\frac{y+1}{2} + s)\Gamma(\frac{y}{2}+1)} - \frac{\Gamma(s-\frac{1}{2}y+\frac{1}{2})}{\Gamma(\frac{y}{2}-1)\Gamma(\frac{y-1}{2} + s)\Gamma(\frac{y}{2}+1)} \Bigg\} \\
= \frac{s\Gamma(\frac{y-1}{2})\Gamma(\frac{1-y}{2}+s)}{4\sqrt{\pi}\Gamma(\frac{y}{2}+1)\Gamma(\frac{1+y}{2}+s)},
\end{multline*}
where the last line is obtained by applying gamma function identities. This completes the proof of Proposition \ref{prop:evaluate}.
\end{proof}

\bigskip
\noindent
{\sc School of Mathematics, University of Edinburgh, James Clerk Maxwell Building, Peter Guthrie Tait Rd, Edinburgh EH9 3FD, U.K.}\newline
\href{mailto:theo.assiotis@ed.ac.uk}{\small theo.assiotis@ed.ac.uk}

{\sc Mathematical Institute, Andrew Wiles Building, University of Oxford, Radcliffe
Observatory Quarter, Woodstock Road, Oxford, OX2 6GG, UK.}\newline
\href{mailto:benjamin.bedert@sjc.ox.ac.uk}{\small benjamin.bedert@sjc.ox.ac.uk}

{\sc Mathematical Institute, Andrew Wiles Building, University of Oxford, Radcliffe
Observatory Quarter, Woodstock Road, Oxford, OX2 6GG, UK.}\newline
\href{mailto:mustafa.gunes@st-hildas.ox.ac.uk}{\small mustafa.gunes@st-hildas.ox.ac.uk}

{\sc Mathematical Institute, Andrew Wiles Building, University of Oxford, Radcliffe
Observatory Quarter, Woodstock Road, Oxford, OX2 6GG, UK.}\newline
\href{mailto:arun.soor@sjc.ox.ac.uk}{\small arun.soor@sjc.ox.ac.uk}

\end{document}